\def\titlerunning#1{\gdef\titrun{#1}}
\def\author#1{\gdef\autrun{\def\and{\unskip, }#1}\gdef\@author{#1}}
\def\address#1{{\def\and{\\\hspace*{18pt}}\renewcommand{\thefootnote}{}%
\footnote {#1}}%
\markboth{\autrun}{\titrun}}
\def\email#1{e-mail: #1}
\def\subjclass#1{{\renewcommand{\thefootnote}{}%
\footnote{\emph{Mathematics Subject Classification (2010):} #1}}}
\def\keywords#1{\par\medskip
\noindent\textbf{Keywords.} #1}
\newtheorem{thm}{Theorem}[section]
\newtheorem{lem}[thm]{Lemma}
\newtheorem{prop}[thm]{Proposition}
\theoremstyle{definition}
\newtheorem{defin}[thm]{Definition}
\newtheorem{rem}[thm]{Remark}
\numberwithin{equation}{section}
\def \C {\mathbb{C}}
\def \a {\alpha }
\def \b {\beta}
\def \La {\Lambda}
\def\w {\omega}
\def\Om{\Omega}
\def\pa{\partial}
\def\na {\nabla}
\begin{document}

\baselineskip=17pt

\titlerunning{Stable Yang-Mills connections on Special Holonomy Manifolds}

\title{Stable Yang-Mills connections on Special Holonomy Manifolds}

\author{Teng Huang}

\date{}

\maketitle

\address{T.~Huang: Department of Mathematics, University of Science and Technology of China,
                   Hefei, Anhui 230026, PR China; \email{oula143@mail.ustc.edu.cn}}

\subjclass{53C07; 58E15}

\begin{abstract}
We prove that energy minimizing Yang-Mills connections on a compact $G_{2}$-manifold has holonomy equal to $G_{2}$ are $G_{2}$-instantons,\ subject to an extra condition on the curvature.\ Furthermore,\ we show that energy minimizing connections on a compact Calabi-Yau $3$-fold has holonomy equal to $SU(3)$ subject to a similar condition are holomorphic.

\keywords{Stable Yang-Mills connection; $G_{2}$-instanton; Hermitian-Yang-Mills connection}

\end{abstract}

\section{Introduction}

Let $G$ be a compact Lie group and $E$ a principal $G$-bundle on a complete oriented Riemannian manifold $M$.\ Let $A$ denote a connection on $E$ and
$\na_{A}$ the associated covariant derivative on the adjoint bundle $\mathfrak{g}_{E}$.\ The Yang-Mills energy of $A$ is
$$ YM(A):=\|F_{A}\|^{2}_{L^{2}(M)}$$
where $F_{A}$ denotes the curvature of $A$.\ A connections is called a Yang-Mills connection if it is a critical point of the Yang-Mills functional.

It is well known that,\ on a $4$-manifold,\ a special class of solutions exists that are automatically global minimizers of this functional.\ These are the self-dual or anti-self-dual connections,\ also called instantons.\ The fact that they are global minimizers of the Yang-Mills functional is a consequence of an energy identity that relates the $L^{2}$-norms of the self-dual or anti-self-dual components of $F_{A}$ to topological data (characteristic classes) of the bundle.\ Not all Yang-Mills connections are instantons.\ See \cite{SS,SSU} for example of $SU(2)$ Yang-Mills connection on $S^{4}$ which are neither self-dual nor anti-self-dual.

It is an interesting question to consider,\ rather than global minimizers of $YM$,\ the local minimizers.\ These are also known as stable critical points,\ characterized by the fact that the second variation of $YM$ at such a critical point is nonnegative.\ Earlier work by Bourguignon and Lawson \cite{bourguignon1981stability} used variational techniques to argue that if $M$ was a compact,\ homogeneous $4$-manifold,\ and the structure group of the vector bundle $E$ was $SU(2)$ or $SU(3)$,\ then a stable Yang-Mills connection is either an instanton or abelian (a direct sum of connections on $U(1)$ bundles).

In \cite{MS},\ Stern considered the minimizing Yang-Mills connections on compact homogeneous $4$-manifold,\ he proved that those connections are ether
instantons or split into a sum of instantons on passage to the adjoint bundle.

In higher dimensions,\ the instanton equation on $M$ can be introduced as follows.\ Assuming there is a closed $(n-4)$-form $\Om$ on $M$ (For our
purposes,\ $M$ is a $G_{2}$ manifold and $\Om$ is the fundamental 3-form $\phi^{(3)}$ or $M$ is a Calabi-Yau $3$-fold and $\Om$ is the $(1,1)$-form $\w$).\ A connection,\ $A$\ ,\ is called an anti-self-dual instanton,\ if it satisfies the instanton equation
\begin{equation}\nonumber
F_{A}=-\ast(\Om\wedge F_{A}).
\end{equation}
Instantons on the higher dimension,\ proposed in \cite{CDFJ} and studied in \cite{DT,DS,HT,RC,G.Tian,RSW},\ are important both in mathematics
\cite{DS,G.Tian} and string theory \cite{GSW}.

In higher dimensional,\ it is interesting to study when a Yang-Mills connection is an instanton.\ In this paper we consider the stable Yang-Mills connections over compact $G_{2}$-manifolds and compact Calabi-Yau $3$-folds.

On $G_{2}$-manifolds,\ the $2$-forms decompose as
$$\La^{2}(M)=\La^{2}_{7}(M)+\La^{2}_{14}(M).$$
where the fiber of $\La^{2}_{k}$ is an irreducible $G_{2}$ representation of dimension $k$.\ Let $F_{A}=F^{7}_{A}+F^{14}_{A}$ be the corresponding
decomposition of the curvature.\ Then we call a connection,\ $A$,\ {a $G_{2}$-instanton},\ if $F^{7}_{A}=0$ (see \cite{DT,RC} ).\ Then,\ we {prove} that energy minimizing connections on a compact $G_{2}$-manifold are in fact $G_{2}$-instantons,\ subject to an extra condition on the curvature.
\begin{thm}(Main Theorem)\label{I1}
Let $(M,\phi^{(3)})$ be a compact $G_{2}$-manifold {which} has holonomy equal to $G_{2}$,\ let $A$ be a stable Yang-Mills connection on a bundle $E$ over $M$ with compact,\ semi-simple Lie group.\ Assume $\psi_{A}$ is $d_{A}$-harmonic,\ $\psi_{A}\triangleq\frac{1}{3}(\ast(F_{A}^{7}\wedge\psi^{(4)})$,\ then $A$ is a $G_{2}$-instanton.
\end{thm}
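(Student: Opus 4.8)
The plan is to test the stability of $A$ (nonnegativity of the second variation of $YM$) against the variation $a=\psi_{A}$, and to play this off against the Bochner--Weitzenb\"ock formula for the $d_{A}$-harmonic $1$-form $\psi_{A}$.

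First I would record two structural facts about $\psi_{A}$. Since $\La^{2}_{14}(M)$ is exactly the annihilator of $\psi^{(4)}$ under wedging, $F_{A}^{14}\wedge\psi^{(4)}=0$, so $3\psi_{A}=\ast(F_{A}\wedge\psi^{(4)})$ and hence $F_{A}\wedge\psi^{(4)}=3\ast\psi_{A}$. Because the holonomy is contained in $G_{2}$ we have $d\psi^{(4)}=0$, and the Bianchi identity gives $d_{A}F_{A}=0$; therefore $d_{A}(F_{A}\wedge\psi^{(4)})=0$, i.e.\ $d_{A}\ast\psi_{A}=0$, which is precisely $d_{A}^{\ast}\psi_{A}=0$. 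Thus $d_{A}^{\ast}\psi_{A}=0$ holds automatically, and the harmonicity hypothesis really amounts to the single extra equation $d_{A}\psi_{A}=0$. I would also use the standard fact that $\beta\mapsto\ast(\beta\wedge\psi^{(4)})$ is an isomorphism $\La^{2}_{7}(M)\xrightarrow{\sim}\La^{1}(M)$, so that $\psi_{A}=0$ is equivalent to $F_{A}^{7}=0$; this is what the conclusion will come down to.

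Next I would write the second variation at the Yang--Mills connection $A$ in the direction $a$,
\[
\delta^{2}YM(a,a)=2\|d_{A}a\|^{2}_{L^{2}(M)}+2\int_{M}\langle F_{A},[a\wedge a]\rangle,
\]
which is $\geq 0$ for every $a$ by stability. Taking $a=\psi_{A}$ and using $d_{A}\psi_{A}=0$ kills the first term, leaving $\int_{M}\langle F_{A},[\psi_{A}\wedge\psi_{A}]\rangle\geq 0$. On the other hand, since a metric of holonomy $G_{2}$ is Ricci-flat, the Weitzenb\"ock formula for the $d_{A}$-Laplacian on $\mathfrak{g}_{E}$-valued $1$-forms carries no Ricci term and reads $\Delta_{A}\psi_{A}=\na_{A}^{\ast}\na_{A}\psi_{A}+\mathcal{F}_{A}(\psi_{A})$, where $\mathcal{F}_{A}$ is the zeroth order operator built from $F_{A}$. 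The point is that, by $\mathrm{ad}$-invariance of the fibre metric, $\int_{M}\langle\mathcal{F}_{A}(\psi_{A}),\psi_{A}\rangle$ equals, up to a fixed constant and sign, the same cubic integral $\int_{M}\langle F_{A},[\psi_{A}\wedge\psi_{A}]\rangle$ that appears in the second variation. Since $\psi_{A}$ is harmonic, $\Delta_{A}\psi_{A}=0$, and pairing with $\psi_{A}$ gives $0=\|\na_{A}\psi_{A}\|^{2}_{L^{2}(M)}+\int_{M}\langle F_{A},[\psi_{A}\wedge\psi_{A}]\rangle$, so this cubic integral equals $-\|\na_{A}\psi_{A}\|^{2}_{L^{2}(M)}\leq 0$.

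Combining the two inequalities forces $\int_{M}\langle F_{A},[\psi_{A}\wedge\psi_{A}]\rangle=0$ and, more importantly, $\na_{A}\psi_{A}=0$: the form $\psi_{A}$ is parallel. To finish I would invoke the hypothesis that the holonomy equals $G_{2}$. A parallel section of $T^{\ast}M\otimes\mathfrak{g}_{E}$ corresponds to a vector in $\mathbb{R}^{7}\otimes\mathfrak{g}$ fixed by the holonomy group, which is contained in $G_{2}\times\mathrm{Hol}(A)$ acting on the two factors separately. Since the standard $7$-dimensional representation of $G_{2}$ on $\mathbb{R}^{7}$ is irreducible and nontrivial it has no nonzero invariant vector, hence neither does $\mathbb{R}^{7}\otimes\mathfrak{g}$; therefore $\psi_{A}\equiv 0$, i.e.\ $F_{A}^{7}=0$ and $A$ is a $G_{2}$-instanton. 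The main obstacle is the middle step: one must pin down the signs so that the second-variation curvature term and the Weitzenb\"ock zeroth order term are genuinely the same integral appearing with opposite signs in the two identities, and verify the coincidence $\int_{M}\langle\mathcal{F}_{A}(\psi_{A}),\psi_{A}\rangle=\int_{M}\langle F_{A},[\psi_{A}\wedge\psi_{A}]\rangle$ with the correct constant. Everything else is either automatic (Ricci-flatness and $d_{A}^{\ast}\psi_{A}=0$) or standard $G_{2}$ representation theory.
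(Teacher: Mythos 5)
Your middle step is fine and is, if anything, more economical than the paper's: stability tested against $a=\psi_{A}$ with $d_{A}\psi_{A}=0$ gives $\langle F_{A},\psi_{A}\wedge\psi_{A}\rangle\geq 0$ (the paper's \eqref{p3}), while pairing the Ricci-flat Weitzenb\"ock identity with $\psi_{A}$ gives $\|\na_{A}\psi_{A}\|^{2}+\langle F_{A},[\psi_{A},\psi_{A}]\rangle=0$ (the paper's \eqref{Y25}); since $[\psi_{A},\psi_{A}]=2\psi_{A}\wedge\psi_{A}$ the two cubic terms do match and $\na_{A}\psi_{A}=0$ follows, exactly as in the paper's \eqref{G2}. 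The genuine gap is your final step. The holonomy group of the coupled connection on $T^{\ast}M\otimes\mathfrak{g}_{E}$ is only the \emph{correlated} subgroup $\{(P_{g}(\gamma),P_{A}(\gamma))\}$ of $G_{2}\times\mathrm{Hol}(A)$, and a vector fixed by a subgroup need not be fixed by the ambient product; invariants of $\mathbb{R}^{7}\otimes\mathfrak{g}$ under such a subgroup are not computed factorwise. Concretely, if $\mathfrak{g}_{E}$ contains a parallel subbundle isomorphic to $\La^{2}_{7}\cong T^{\ast}M$ as a bundle with connection (take $E$ the orthonormal frame bundle with the Levi-Civita connection), then $\La^{1}(M,\mathfrak{g}_{E})$ contains $T^{\ast}M\otimes T^{\ast}M$ and hence the nonzero parallel section given by the metric, even though $(\mathbb{R}^{7})^{G_{2}}=0$. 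So $\na_{A}\psi_{A}=0$ together with $\mathrm{Hol}(g)=G_{2}$ does not by itself force $\psi_{A}=0$.

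This is precisely what the paper's intermediate lemmas are for: one needs the \emph{pointwise} identity $\psi_{A}\wedge\psi_{A}=0$ (the components of $\psi_{A}$ span an abelian subalgebra), which is strictly stronger than the vanishing of the single integral $\int_{M}\langle F_{A},[\psi_{A},\psi_{A}]\rangle$ that your argument produces. The paper obtains it by testing stability against the two-parameter family $t\eta+t^{3/2}\w$ and letting $t\to0$, which yields $[\ast F^{7}_{A},\eta]=0$ for every $d_{A}$-harmonic $\eta$ (Lemma \ref{G3}); applied to $\eta=\psi_{A}$ this gives $\Pi^{2}_{7}(\psi_{A}\wedge\psi_{A})=0$, hence $F^{7}_{A+t\psi_{A}}=F^{7}_{A}$, so $YM(A+t\psi_{A})=3\|F^{7}_{A+t\psi_A}\|^{2}+\kappa(E)$ is constant in $t$ and the vanishing of its $t^{4}$-coefficient forces \eqref{G20}, i.e.\ $\psi_{A}\wedge\psi_{A}=0$. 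Only with this in hand does the identity $0=[\na_{i},\na_{j}]\psi_{A}=\mathrm{ad}(F_{ij})\psi_{A}+R_{ij}\psi_{A}$ split into orthogonal pieces (ad-invariance of the fibre metric gives $\langle[F_{ij},\psi_{k}],\psi_{l}\rangle=\langle F_{ij},[\psi_{k},\psi_{l}]\rangle=0$), so that $R_{ij}\psi_{A}=0$ separately; then Ambrose--Singer and the irreducibility of $\mathbb{R}^{7}$ under $\mathfrak{g}_{2}$ give $\psi_{A}=0$. To repair your proof you must either supply this abelian-subalgebra step or find another mechanism that decouples the Riemannian holonomy from $\mathrm{Hol}(A)$.
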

Our proof of Theorem \ref{I1} extends the idea of Stern \cite{MS}.\ Let $A(t)$ be a smooth family of connections on $E$ with $A(0)=A$.\ The assumption that $A$ is a local minimum of the Yang-Mills energy implies the variational inequality
$$\frac{d^{2}}{dt^{2}}\mid_{t=0}YM(A(t))\geq0.$$
The proof of the theorem relies on choosing useful families of test {connections} with the difference,\ $A(t)-A$,\ constructed from $F_{A}$.\ In \cite{bourguignon1981stability},\ the test {connections} $A(t)=A+ti_{X}F^{+}_{A}$ {were} used,\ where $i_{X}$ denotes interior multiplication by the vector field $X$,\ and $X$ runs over a basis of Killing vector fields.\ In \cite{MS},\ Stern's proof of Theorem 1.1 extends the variational argument of Bourguignon,\ Lawson,\ and Simons \cite{bourguignon1981stability}.

The curvature is the only natural object from which to construct test variations,\ but we need a map from $2$-forms to $1$-forms in order to create test variations from the curvature $2$-form.\ {On a} $G_{2}$-manifold,\ there exists a covariant $3$-form $\phi^{(3)}$,\ {which induces} a natural map from $2$-forms to $1$-forms.

On a K\"{a}hler $3$-manifold with K\"{a}hler form $\w$ the curvature decomposes as
$$F_{A}=F^{2,0}_{A}+F^{1,1}_{A0}+\frac{1}{3}(\La_{\w}F_{A})\otimes\w+F^{0,2}_{A},$$
where $\La_{\w}$ denotes the adjoint of exterior multiplication by $\w$,\ and $F^{1,1}_{A0}=F^{1,1}_{A}-\frac{1}{3}\hat{F}_{A}\otimes\w$,\ we denote
$\hat{F}_{A}=\frac{1}{3}(\La_{\w}F_{A})$.\ We {call} a connection $A$ an $\w$-instanton,\ if $F_{A}$ satisfies
$$F_{A}=-\ast(\w\wedge F_{A}).$$
The K\"{a}hler {identities} (see for example \cite{RW} Theorem 3.16)
$$\w\wedge F_{A}=\ast(F^{0,2}_{A}+\frac{2}{3}\hat{F}_{A}\otimes\w-F^{1,1}_{A0}+F^{0,2}_{A}).$$
Then {A is} an $\w$-instanton over a K\"{a}hler $3$-fold if only if
$$F^{0,2}_{A}=0\quad and \quad \La_{\w}{F}_{A}=0.$$
By Donaldson-Uhlenbeck-Yau theorem \cite{DK1985,UY},\ the bundle must be polystable.\\
In the case of Calabi-Yau $3$-folds $CY^{3}$,\ we consider a Hermitian vector bundle $E\rightarrow CY^{3}$ over $(CY^{3},\w)$.\ The Riemannian product manifold $M:=CY^{3}\times{S^{1}}$ is {naturally} a real $7$-dimensional $G_{2}$-manifold (\cite{Jo}, 11.1.2).\ We {pull} back a connection $A$ on $E\rightarrow CY^{3}$ to $p_{1}^{\ast}{E}\rightarrow M$ via the canonical projection
$$p_{1}: CY^{3}\times S^{1}\rightarrow CY^{3}.$$
From \cite{He} Proposition 8,\ the canonical projection gives a one-to-one correspondence Hermitian-Yang-Mills connections on $E$ and $S^{1}$-invariant $G_{2}$-instantons on the pullback bundle $p^{\ast}_{1}E$.\ In Section 4,\ we observation that the pullback connection $p^{\ast}_{1}A$ such that $\psi_{p_{1}^{\ast}A}$ is harmonic is equivalent to $\xi_{A}$ is also harmonic.\ Then we show that energy minimizing connections on a Calabi-Yau $3$-fold subject to a similar condition are holomorphic.
\begin{thm}\label{T1}
Let $(CY^{3},\w,\Om)$ be a compact Calabi-Yau $3$-fold {which} has holonomy equal to $SU(3)$,\ let $A$ be a Hermitian,\ stable Yang-Mills connection on a Hermitian bundle $E$ over $CY^{3}$.\ Assume $\xi_{A}$ is $d_{A}$-harmonic,\ $\xi_{A}\triangleq\ast(F^{0,2}_{A}\wedge\Om)$,\ then $(E,\bar{\pa}_{A})$ is a holomorphic bundle.\ Further more,\ if $A$ is irreducible,\ then $A$ is {an} $\w$-instanton.
\end{thm}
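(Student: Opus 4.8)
The plan is to transplant the variational argument behind the Main Theorem to the Calabi--Yau setting, using the parallel holomorphic volume form $\Om$ in place of $\psi^{(4)}$. Since $\Om$ is a $\na_{A}$-parallel $(3,0)$-form of constant norm, the form $\xi_{A}=\ast(F^{0,2}_{A}\wedge\Om)$ is an $\mathrm{End}(E)$-valued $1$-form of type $(1,0)$ with $|\xi_{A}|=c\,|F^{0,2}_{A}|$ pointwise for a fixed $c>0$, so $\xi_{A}$ vanishes exactly where $F^{0,2}_{A}$ does. From it I form the $\mathfrak{g}_{E}$-valued (real) test $1$-form $a=\xi_{A}-\xi_{A}^{\dagger}$, where $\xi_{A}^{\dagger}$ is the Hermitian-conjugate $(0,1)$-form, and set $A(t)=A+ta$. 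Stability of $A$ then gives
\begin{equation}\nonumber
0\le \frac{d^{2}}{dt^{2}}\Big|_{t=0}YM(A(t))=2\|d_{A}a\|^{2}_{L^{2}(CY^{3})}+2\int_{CY^{3}}\langle F_{A},[a\wedge a]\rangle.
\end{equation}

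First I would exploit the harmonicity hypothesis. Because $A$ is unitary, Hermitian conjugation interchanges $\pa_{A}$ and $\bar{\pa}_{A}$ and commutes with the Laplacian, so $\xi_{A}^{\dagger}$ is $d_{A}$-harmonic whenever $\xi_{A}$ is; hence $a$ is $d_{A}$-harmonic and, on the compact $CY^{3}$, $d_{A}a=0$. The first term above drops out, and stability collapses to
\begin{equation}\nonumber
\int_{CY^{3}}\langle F_{A},[a\wedge a]\rangle\ge 0.
\end{equation}
It then remains to evaluate this quadratic expression. Decomposing $[a\wedge a]$ by $(p,q)$-type and pairing it against $F_{A}=F^{2,0}_{A}+F^{1,1}_{A}+F^{0,2}_{A}$, and using $F^{2,0}_{A}=-(F^{0,2}_{A})^{\dagger}$ for a Hermitian connection together with the contraction identities for $\Om$ on a Calabi--Yau $3$-fold (notably that $\Om\wedge\bar{\Om}$ is a constant multiple of $\w^{3}/3!$), I expect the mixed $(1,1)$ contributions to cancel and the integral to reduce to $-\kappa\,\|F^{0,2}_{A}\|^{2}_{L^{2}}$ with $\kappa>0$. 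Combined with the inequality above this forces $F^{0,2}_{A}=0$.

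Once $F^{0,2}_{A}=0$ we have $\bar{\pa}_{A}^{2}=F^{0,2}_{A}=0$, so $\bar{\pa}_{A}$ is integrable and, by the Koszul--Malgrange theorem, $(E,\bar{\pa}_{A})$ is a holomorphic bundle; this is the first assertion. For the second, $F^{0,2}_{A}=0$ (and hence $F^{2,0}_{A}=0$) makes $F_{A}$ of type $(1,1)$, so Bianchi gives $\bar{\pa}_{A}F_{A}=\pa_{A}F_{A}=0$. The Kähler identities then turn the Yang--Mills equation $d_{A}^{\ast}F_{A}=0$ into $(\pa_{A}-\bar{\pa}_{A})(\La_{\w}F_{A})=0$, and matching types gives $\pa_{A}(\La_{\w}F_{A})=\bar{\pa}_{A}(\La_{\w}F_{A})=0$, i.e. $\La_{\w}F_{A}$ is a $\na_{A}$-parallel section of $\mathfrak{g}_{E}$. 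If $A$ is irreducible, any such parallel section is central, and therefore vanishes in the (trace-free, semi-simple) case, so $\La_{\w}F_{A}=0$. Together with $F^{0,2}_{A}=0$, the displayed Kähler identity then reads $F_{A}=-\ast(\w\wedge F_{A})$, so $A$ is an $\w$-instanton.

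The main obstacle is the middle step: verifying that the pairing $\langle F_{A},[a\wedge a]\rangle$ truly collapses, with the correct sign, to a negative multiple of $|F^{0,2}_{A}|^{2}$. This is exactly where the full Calabi--Yau structure must be used --- the parallelism and normalization of $\Om$, and the $SU(3)$ type-decomposition of $2$-forms, are needed both to annihilate the unwanted $(1,1)$ terms and to pin down $\kappa>0$. Conceptually this is the Calabi--Yau shadow of the computation driving the Main Theorem: under the product $G_{2}$-structure on $M=CY^{3}\times S^{1}$ and He's correspondence, the variation $a$ and the harmonicity of $\xi_{A}$ correspond precisely to $\psi_{p_{1}^{\ast}A}$ and its harmonicity, which is why the same mechanism yields $F^{0,2}_{A}=0$ here.
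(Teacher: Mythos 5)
Your proposal has a genuine gap at its central step, and you have in fact flagged it yourself: the claim that $\int_{CY^{3}}\langle F_{A},[a\wedge a]\rangle$ collapses to $-\kappa\,\|F^{0,2}_{A}\|^{2}_{L^{2}}$ with $\kappa>0$ is not proved, and it cannot be correct as stated. Since $\xi_{A}$ is built linearly from $F^{0,2}_{A}$, the pairing $\langle F_{A},[a\wedge a]\rangle$ is \emph{cubic} in the curvature, whereas $\|F^{0,2}_{A}\|^{2}$ is quadratic; no pointwise algebraic identity of the proposed form can hold. A second warning sign is that your route would never use the hypothesis that the holonomy equals $SU(3)$, yet that hypothesis is essential: the conclusion $F^{0,2}_{A}=0$ in the paper is \emph{not} forced directly by a sign argument from stability, but arises from a dichotomy --- either $\xi_{A}=0$ or the Riemannian holonomy of $CY^{3}$ reduces --- and the holonomy assumption is exactly what rules out the second branch.

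The paper's actual mechanism is the following. One pulls $A$ back to the $G_{2}$-manifold $CY^{3}\times S^{1}$ and checks (Proposition 4.4) that stability and the harmonicity of $\xi_{A}$ transfer to stability of $\tilde{A}$ and harmonicity of $\psi_{\tilde{A}}$. The $G_{2}$ machinery then applies: the second variation along harmonic directions, combined with the Weitzenb\"{o}ck formula \emph{and} the topological identity $YM(A)=3\|F^{7}_{A}\|^{2}+\kappa(E)$ (which requires compactness), yields $[\ast F^{7}_{A},\psi_{A}]=0$; the quartic-polynomial trick with the variation $A+t\psi_{A}$ then gives $\psi_{A}\wedge\psi_{A}=0$, hence $[\xi_{A},\xi_{A}]=[\xi_{A},\xi_{A}^{\dagger}]=[\La_{\w}F_{A},\xi_{A}]=0$; the Weitzenb\"{o}ck formula then upgrades harmonicity to $\na_{A}\xi_{A}=0$; and finally $0=[\na_{i},\na_{j}]\xi_{A}$ together with the abelian image of $\xi_{A}$ forces $R_{ij}\xi_{A}=0$, i.e.\ either $\xi_{A}=0$ or the holonomy reduces. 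To repair your argument you would need to supply this entire chain (or a genuine substitute for it); the stability inequality alone, applied to the single test form $a=\xi_{A}-\xi_{A}^{\dagger}$, does not produce a definite sign. Your concluding steps (Koszul--Malgrange once $F^{0,2}_{A}=0$, and the parallel-section argument showing $\La_{\w}F_{A}=0$ for irreducible $A$) are fine and match the paper.
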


\section{Preliminaries}
First,\ we recall some standard  notations and definitions.

Let $M$ be a complete Riemannian manifold and $E$ a principal $G$ bundle over $M$,\ with $G$ a compact Lie group.\ Let $\mathfrak{g}_{E}$ denote the adjoint bundle of $E$,\ endowed with a $G$-invariant inner product.\ Let $\La^{p}(M,\mathfrak{g}_{E})$ denote the smooth $p$-forms with values in $\mathfrak{g}_{E}$.\ Given a connection on
$E$,\ we denote by $\na_{A}$ the corresponding covariant derivative on $\La^{\ast}(M,\mathfrak{g}_{E})$ induced by $A$ and the Levi-Civita connection of $M$.\ Let
$d_{A}$ denote the exterior derivative associated to $\na_{A}$.\ For $\na_{A}$ and $d_{A}$,\ we have adjoint operators $\na^{\ast}_{A}$ and $d^{\ast}_{A}$.\ We also have {the Weitzenb\"{o}ck} formula (\cite{bourguignon1981stability},\ Theorem 3.2)
\begin{equation}\label{2.1}
(d_{A}d_{A}^{\ast}+d_{A}^{\ast}d_{A})\varphi=\na^{\ast}_{A}\na_{A}\varphi+\varphi\circ Ric+R^{A}(\varphi)
\end{equation}
where $\varphi\in\La^{1}(M,\mathfrak{g}_{E})$,\ $Ric$ is the Ricci tensor.

In a local orthonormal frame $(e_{1},\ldots,e_{n})$ of $TM$ ,\ the operators of $\varphi\circ Ric$ and $R^{A}(\varphi)$ are defined by Bourguignon and Lawson
\cite{bourguignon1981stability} as follows.
\begin{equation}\nonumber
\varphi\circ Ric(e_{i})=\sum_{j=1}^{n}R_{ij}\varphi_{j}.
\end{equation}
and
\begin{equation}\nonumber
R^{A}(\varphi)_{X}\equiv\sum_{j=1}^{n}[F_{A}(e_{j},X),\varphi_{j}].
\end{equation}
We are interested in minima of the Yang-Mills energy
\begin{equation}\nonumber
YM(A)=\|F_{A}\|^{2}_{L^{2}(M)},
\end{equation}
where $F_{A}$ denotes the curvature of $A$.\ Critical points of this energy satisfy the Yang-Mills equation
$$d^{\ast}_{A}F_{A}=0,$$
where $d^{\ast}_{A}$ denotes adjoint of $d_{A}$.\ In addition,\ all connections satisfy the Bianchi identity
$${d_{A}F_{A}=0.}$$
If $\psi\in\La^{1}(M,\mathfrak{g}_{E})$ then
\begin{equation}\label{1.1}
F_{A+\psi}=F_{A}+d_{A}\psi+\psi\wedge\psi.
\end{equation}
{Here $\a\wedge\b$ ($\a,\b\in\La^{1}(M,\mathfrak{g}_{E})$) denotes the wedge product of the two forms with the Lie bracket used to combine the values in $\mathfrak{g}$.\ In detail,\ One can see this in \cite{We} Appendix A.}
As a notional convenience,\ we will often use $L_{\w}$ to {denoted} exterior multiplication on the left by a form $\w$.\ Its adjoint is denote $\La_{\w}$.\
Thus
$$L_{\w}h:=\w\wedge h,\ and\ \langle f,L_{\w}h\rangle=\langle\La_{\w}f,h \rangle.$$
If $A$ minimizes the Yang-Mills energy,\ then of course it satisfies the inequality
\begin{equation}\label{p2}
\|F_{A}\|^{2}\leq\|F_{A+\psi}\|^{2}
\end{equation}
for all smooth compactly supported $\psi$.\ Replacing $\psi$ by $t\psi$ in (\ref{p2}),\ using (\ref{1.1}),\ and taking the limit as $t\rightarrow0$ leads
to the second variational inequality
\begin{equation}\label{p3}
0\leq\|d_{A}\psi\|^{2}+2\langle F_{A},\psi\wedge\psi\rangle
\end{equation}

\section{Yang-Mills connection and $G_{2}$-instanton}
{On} this section,\ we begin to prove the first main theorem of our article.\ At first,\ we introduce some background knowledge about $G_{2}$-manifold and {$G_{2}$-instantons}.

\subsection{Gauge theory in $G_{2}$-manifolds}
This section is devoted to the background language for the subsequent analytical investigation.\ The main references are \cite{RLB1987,RLB,He,Jo,Sa,MV}.

\begin{defin}({\cite{MV} Definition 3.3})
Let $M$ be a $7$-dimensional smooth manifold,\ and $\phi^{(3)}\in\La^{3}(M)$ a $3$-form.\ $(M,\phi^{(3)}$) is called a $G_{2}$-manifold if $\phi^{(3)}$ is non-degenerate and positive everywhere on $M$.\ The manifold $(M,g,\phi^{(3)})$ is called a holonomy $G_{2}$-manifold if $\phi^{(3)}$ is parallel with respect to the Levi-Civita connection associated with $g$.\ Further on,\ we shall consider only holonomy $G_{2}$-manifolds,\ and (abusing the language) omit the word holonomy.
\end{defin}
Under the action of $G_{2}$,\ the space $\La^{2}(M)$ splits into irreducible representations,\ as follows.
$$\La^{2}(M)=\La^{2}_{7}(M)\oplus\La_{14}^{2}(M)$$
These summands for $\La^{2}(M)$ can be characterized as follows:
\begin{equation}
\La^{2}_{7}(M)=\{\a\in\La^{2}(M)\mid\ast(\a\wedge\phi^{(3)})=2\a\}=\{\a\in\La^{2}(M)\mid\ast(\psi^{(4)}\wedge\ast(\psi^{(4)}\wedge\a))=3\a\},
\end{equation}
\begin{equation}
\La^{2}_{14}(M)=\{\a\in\La^{2}(M)\mid\ast(\a\wedge\phi^{(3)})=-\a\}=\{\a\in\La^{2}(M)\mid\a\wedge\psi^{(4)}=0\}.
\end{equation}
We define a projection operator
$$\Pi^{2}_{7}:\La^{2}(M)\rightarrow\La^{2}_{7}(M),$$
\begin{equation}\label{1}
\a\mapsto\Pi^{2}_{7}(\a)=\frac{1}{3}(\a+\ast(\phi^{(3)}\wedge\a))=\frac{1}{3}\ast(\psi^{(4)}\wedge{\ast}(\psi^{(4)}\wedge\a)).
\end{equation}
By the definition of $\La^{2}_{7}(M)$,\ it is easy to get
\begin{prop}\label{3.1}
If $\a\in\La^{2}(M,\mathfrak{g}_{E})$,\ then we have a point-wise identity
$$3|\Pi^{2}_{7}\a|^{2}=|\b|^{2},$$
where $\b=\ast(\a\wedge\psi^{(4)})$.
\end{prop}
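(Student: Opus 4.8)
The plan is to reduce everything to the $\La^2_7$-component of $\a$ and then to the explicit normalization of the $G_2$ four-form. First I would observe that $\b=\ast(\a\wedge\psi^{(4)})$ depends only on $\Pi^2_7\a$: by the second characterization of $\La^2_{14}(M)$ we have $\a\wedge\psi^{(4)}=0$ whenever $\a\in\La^2_{14}(M)$, so $\b=\ast(\Pi^2_7\a\wedge\psi^{(4)})$. Since $\a$ has degree $2$ and $\psi^{(4)}$ has degree $4$, the wedge commutes, $\a\wedge\psi^{(4)}=\psi^{(4)}\wedge\a$, hence $\b=\ast(\psi^{(4)}\wedge\a)$. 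Feeding this into the projection formula (\ref{1}) gives
$$3\,\Pi^2_7\a=\ast\big(\psi^{(4)}\wedge\ast(\psi^{(4)}\wedge\a)\big)=\ast(\psi^{(4)}\wedge\b).$$
Because the Hodge star is a pointwise isometry on $\La^{\ast}(M,\mathfrak{g}_{E})$, applying $|\cdot|^2$ yields the clean intermediate identity $9\,|\Pi^2_7\a|^2=|\psi^{(4)}\wedge\b|^2$.

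It then remains to prove the pointwise normalization $|\psi^{(4)}\wedge\b|^2=3\,|\b|^2$ for every $1$-form $\b$. The $\mathfrak{g}_{E}$-values play no role here: choosing an orthonormal basis of $\mathfrak{g}$ at the point and expanding $\b$ in it, the wedge with the scalar form $\psi^{(4)}$ and the norm both split componentwise, so it suffices to treat scalar-valued $\b$. I would prove the scalar identity in a $G_2$-adapted orthonormal coframe $e^1,\dots,e^7$ in which $\psi^{(4)}=\ast\phi^{(3)}$ takes its standard form. Since $G_2$ acts transitively on the unit sphere $S^6\subset\La^1$, it is enough to check it for a single unit covector, say $\b=e^1$: discarding the monomials of $\psi^{(4)}$ that contain $e^1$, one finds $\psi^{(4)}\wedge e^1$ is a sum of three orthonormal $5$-forms, so $|\psi^{(4)}\wedge e^1|^2=3=3\,|e^1|^2$, and the general case follows by invariance and linearity.

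Combining the two steps gives $9\,|\Pi^2_7\a|^2=|\psi^{(4)}\wedge\b|^2=3\,|\b|^2$, that is $3\,|\Pi^2_7\a|^2=|\b|^2$, as claimed. The only genuine computation, and hence the main obstacle, is the normalization constant $3$ in $|\psi^{(4)}\wedge\b|^2=3\,|\b|^2$; everything else is formal manipulation of the projection formula and the isometry property of $\ast$. As an alternative to the frame computation, one may identify the constant representation-theoretically: the operator $\La_{\psi^{(4)}}L_{\psi^{(4)}}$ annihilates $\La^2_{14}(M)$ and acts as the scalar $3$ on the irreducible summand $\La^2_7(M)$, which can be read off directly from the eigenvalue relation $\ast(\psi^{(4)}\wedge\ast(\psi^{(4)}\wedge\a))=3\a$ appearing in the characterization of $\La^2_7(M)$; pairing $|\psi^{(4)}\wedge\a|^2=\langle\a,\La_{\psi^{(4)}}L_{\psi^{(4)}}\a\rangle=3\,|\Pi^2_7\a|^2$ then gives the result at once.
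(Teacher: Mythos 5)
Your proposal is correct. The paper itself offers no argument at all for this proposition (it is dismissed with ``by the definition of $\La^{2}_{7}(M)$, it is easy to get''), and the intended justification is almost certainly the one you relegate to your final sentence: since $\La_{\psi^{(4)}}L_{\psi^{(4)}}\a=\ast\big(\psi^{(4)}\wedge\ast(\psi^{(4)}\wedge\a)\big)=3\Pi^{2}_{7}\a$ by the projection formula (\ref{1}), and $\Pi^{2}_{7}$ is an orthogonal projection, one gets $|\b|^{2}=|\a\wedge\psi^{(4)}|^{2}=\langle\a,\La_{\psi^{(4)}}L_{\psi^{(4)}}\a\rangle=3|\Pi^{2}_{7}\a|^{2}$ in one line, using only the eigenvalue characterization of $\La^{2}_{7}$ stated just above the proposition. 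Your main route is a correct but longer detour: it passes through the auxiliary normalization $|\psi^{(4)}\wedge\b|^{2}=3|\b|^{2}$ for $1$-forms $\b$, which is a genuinely different fact (it is the identity $\ast\big(\ast(\a\wedge\ast\phi^{(3)})\wedge\ast\phi^{(3)}\big)=3\a$ that the paper only quotes later from Bryant), and you then have to establish it by the frame computation with the standard form of $\psi^{(4)}$ together with $G_{2}$-transitivity on $S^{6}$. That computation is right (each index occurs in exactly four of the seven monomials of $\psi^{(4)}$, so three survive the wedge with $e^{1}$), and it has the side benefit of actually proving the $1$-form identity the paper later uses without proof; but for the proposition itself the closing representation-theoretic argument is the shorter and more faithful one. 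The reduction to scalar-valued forms via an orthonormal basis of $\mathfrak{g}$ and the observation that $\b$ kills the $\La^{2}_{14}$-part are both fine, though neither is needed for the one-line version.
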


Consider a vector bundle $E\rightarrow M$ over a compact $G_{2}$-manifold $(M,\phi^{(3)})$,\ the curvature $F_{A}$ of connection $A$ decomposes as:
$$F_{A}=F^{7}_{A}+F^{14}_{A},\quad F^{i}_{A}\in\La^{2}_{i}(M,\mathfrak{g}_{E}),\quad i=7,14.$$
The Yang-Mills functional is
\begin{equation}\label{G4}
YM(A):=\|F_{A}\|^{2}_{L^{2}(M)}=\|F^{7}_{A}\|^{2}_{L^{2}(M)}+\|F^{14}_{A}\|^{2}_{L^{2}(M)}.
\end{equation}
It is well-known that the values of $YM(A)$ can be related to a certain characteristic class of the bundle $E$,\ given by
$$\kappa(E):=-\int_{M}tr(F^{2}_{A})\wedge\phi^{(3)}.$$
Using the property $d\phi^{(3)}=0$,\ we {know} that the de-Rham class $[tr(F^{2}_{A})\wedge\phi^{(3)}]$ is independent of $A$,\ thus the integral is a topological invariant.\ From the {decomposition} of $F_{A}$,\ we have
$$\kappa(E)=-2\|F^{7}_{A}\|^{2}_{L^{2}(M)}+\|F^{14}_{A}\|^{2}_{L^{2}(M)},$$
and combing with (\ref{G4}) we get
$$YM(A)=3\|F^{7}_{A}\|^{2}_{L^{2}(M)}+\kappa(E).$$
Hence $YM(A)$ attains its absolute minimum at a connection whose curvature lies either in $\La^{2}_{7}$ or in $\La^{2}_{14}$.\ We call a connection $A$ is a $G_{2}$-instanton,\ if $F_{A}$ satisfies
$$F_{A}\wedge\psi^{(4)}=0,$$
or,\ equivalently
$$F_{A}+\ast(F_{A}\wedge\phi^{(3)})=0.$$

\subsection{Stable Yang-Mills connections and $G_{2}$-instantons}
In this section,\ we begin to prove our main theorem,\ we return to consider the Yang-Mills connection over $G_{2}$-manifold.\ At first,\ we define $\psi_{A}\in{\La^{1}(M,\mathfrak{g}_{E}})$ such that
\begin{equation}\label{3.7}
\ast(\psi^{(4)}\wedge\psi_{A})=F_{A}^{7}.
\end{equation}
Using the identity, (see \cite{RLB})
$$\ast\big{(}\ast(\a\wedge\ast\phi^{(3)})\wedge\ast\phi^{(3)}\big{)}=3\a, \quad\forall\a\in\La^{1}(M),$$
hence,\ we have
$$\psi_{A}=\frac{1}{3}\big{(}\ast(F^{7}_{A}\wedge\psi^{(4)})\big{)}.$$
\begin{lem}\label{3.9}
Let $A$ be a connection on a complete $G_{2}$-manifold,\ then $$d^{\ast}_{A}\psi_{A}=0,$$
where $\psi_{A}$ is defined as (\ref{3.7}).\ Furthermore,\ if $A$ is a Yang-Mills connection,\ $\psi_{A}$ also satisfies
\begin{equation}\label{3.10}
\Pi^{2}_{7}(d_{A}\psi_{A})=0.
\end{equation}
\end{lem}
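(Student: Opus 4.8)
The plan is to push everything through two structural facts that hold on a holonomy $G_2$-manifold: the forms $\phi^{(3)}$ and $\psi^{(4)}=\ast\phi^{(3)}$ are parallel (so $d\phi^{(3)}=d\psi^{(4)}=0$ and they may be moved across $d_A$ and $\ast$ freely), and the Hodge star acts on the two curvature summands through left multiplication by $\phi^{(3)}$ with \emph{different} eigenvalues. Concretely, applying $\ast$ to the defining relations $\ast(\alpha\wedge\phi^{(3)})=2\alpha$ on $\La^2_7$ and $\ast(\alpha\wedge\phi^{(3)})=-\alpha$ on $\La^2_{14}$, and using $\ast\ast=1$ on $5$-forms, I would first record $\ast F^7_A=\tfrac12\,F^7_A\wedge\phi^{(3)}$ and $\ast F^{14}_A=-F^{14}_A\wedge\phi^{(3)}$. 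These two identities are the engine of the whole argument.

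For the first assertion I would rewrite $\psi_A$ in a shape adapted to $d^{\ast}_A$. Since $F^7_A\wedge\psi^{(4)}$ is a $6$-form and $\ast\ast=1$ in that degree, applying $\ast$ to $\psi_A=\tfrac13\ast(F^7_A\wedge\psi^{(4)})$ gives $\ast\psi_A=\tfrac13\,F^7_A\wedge\psi^{(4)}$; and because $F^{14}_A\wedge\psi^{(4)}=0$ by the very characterization of $\La^2_{14}$, this equals $\tfrac13\,F_A\wedge\psi^{(4)}$. Then $d^{\ast}_A\psi_A=-\ast d_A\ast\psi_A=-\tfrac13\ast\!\big(d_A F_A\wedge\psi^{(4)}+F_A\wedge d\psi^{(4)}\big)$, which vanishes by the Bianchi identity $d_A F_A=0$ together with $d\psi^{(4)}=0$. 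Note this step uses Bianchi only, not the Yang-Mills equation, consistent with the fact that the first claim is stated for an arbitrary connection.

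For the second assertion I would reduce $\Pi^2_7(d_A\psi_A)=0$ to $d^{\ast}_A F^7_A=0$. Applying $\ast$ to the definition $\ast(\psi^{(4)}\wedge\psi_A)=F^7_A$ gives $\psi^{(4)}\wedge\psi_A=\ast F^7_A$; differentiating with $d_A$ (using $d\psi^{(4)}=0$) and applying $\ast$ yields $\ast(\psi^{(4)}\wedge d_A\psi_A)=\ast d_A\ast F^7_A=d^{\ast}_A F^7_A$, so that the projection formula $\Pi^2_7(\alpha)=\tfrac13\ast(\psi^{(4)}\wedge\ast(\psi^{(4)}\wedge\alpha))$ with $\alpha=d_A\psi_A$ collapses to $\Pi^2_7(d_A\psi_A)=\tfrac13\ast(\psi^{(4)}\wedge d^{\ast}_A F^7_A)$. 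It therefore suffices to prove $d^{\ast}_A F^7_A=0$. Differentiating the two eigenvalue identities (and using $d\phi^{(3)}=0$) gives $d^{\ast}_A F^7_A=\tfrac12\ast(d_A F^7_A\wedge\phi^{(3)})$ and $d^{\ast}_A F^{14}_A=-\ast(d_A F^{14}_A\wedge\phi^{(3)})$. Invoking Bianchi in the form $d_A F^{14}_A=-d_A F^7_A$ turns the second into $d^{\ast}_A F^{14}_A=\ast(d_A F^7_A\wedge\phi^{(3)})=2\,d^{\ast}_A F^7_A$, and feeding this into the Yang-Mills equation $0=d^{\ast}_A F_A=d^{\ast}_A F^7_A+d^{\ast}_A F^{14}_A=3\,d^{\ast}_A F^7_A$ forces $d^{\ast}_A F^7_A=0$, whence $\Pi^2_7(d_A\psi_A)=0$.

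The main obstacle is exactly this last point: the bare Yang-Mills equation controls only the sum $d^{\ast}_A F^7_A+d^{\ast}_A F^{14}_A$, and one might fear it cannot separate the two pieces. The decisive observation is that the mismatched Hodge eigenvalues ($+\tfrac12$ versus $-1$ relative to $L_{\phi^{(3)}}$) make Bianchi produce the \emph{asymmetric} relation $d^{\ast}_A F^{14}_A=2\,d^{\ast}_A F^7_A$ rather than the trivial $d^{\ast}_A F^{14}_A=-d^{\ast}_A F^7_A$; it is this asymmetry that lets the single scalar identity $d^{\ast}_A F_A=0$ pin down the individual component $d^{\ast}_A F^7_A=0$. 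Care is needed only with the degree-dependent signs of $d^{\ast}_A=\pm\ast d_A\ast$ and with the various $\ast\ast$ factors, but these are routine in dimension seven.
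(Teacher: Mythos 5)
Your proof is correct and follows essentially the same route as the paper: the first claim is Bianchi plus $d\psi^{(4)}=0$ plus $F^{14}_A\wedge\psi^{(4)}=0$, and the second reduces $\Pi^2_7(d_A\psi_A)=0$ to $d^\ast_AF^7_A=0$, which you obtain from Bianchi and the Yang--Mills equation. Your derivation of $d^\ast_AF^7_A=0$ via the separate Hodge-eigenvalue identities and the relation $d^\ast_AF^{14}_A=2\,d^\ast_AF^7_A$ is just an unpacking of the paper's one-line computation with the projector $\Pi^2_7=\tfrac13(1+\ast L_{\phi^{(3)}})$.
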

\begin{proof}
First,\ from the Bianchi identity $d_{A}F_{A}=0$ and the fact $d\psi^{(4)}=0$,\ we have
\begin{equation}\nonumber
0=d_{A}(F_{A}\wedge\psi^{(4)})=d_{A}(F^{7}_{A}\wedge\psi^{(4)})=3d_{A}\ast\psi_{A}.
\end{equation}
Hence we obtain $d_{A}^{\ast}\psi_{A}=0$.\\
Further more,\ if $A$ is a Yang-Mills connection,\ using Bianchi identity again,\ we have
$$d_{A}^{\ast}F^{7}_{A}=\frac{1}{3}\ast d_{A}(F_{A}\wedge\phi^{(3)})=0.$$
We {applying the} operator $d^{\ast}_{A}$ to (\ref{3.7}) each side,\ then we get
\begin{equation}\label{3.3}
\ast(d_{A}\psi_{A}\wedge\psi^{(4)})=0
\end{equation}
Hence form the Proposition \ref{3.1} and (\ref{3.3}),\ we have
$$\Pi^{2}_{7}(d_{A}\psi_{A})=0.$$
\end{proof}
Now,\ we {define} a connection on a $G_{2}$-manifold has a harmonic curvature.\ {The definition is inspired by Itoh's article \cite{MI}.}
\begin{defin}
A connection $A$ on a complete $G_{2}$-manifold is said to {has} harmonic curvature if $\psi_{A}$ is $d_{A}$-harmonic,\ $\psi_{A}$ is defined as in (\ref{3.7}),\ i.e,\ $d_{A}\psi_{A}=d^{\ast}_{A}\psi_{A}=0$.
\end{defin}
In this article,\ we consider a Yang-Mills connection $A$ {which} has harmonic curvature on $E$ over a $G_{2}$-manifold.\ {At first,\ we prove a useful lemma as follow.}

\begin{lem}\label{G3}
Let $M$ be a compact $G_{2}$-manifold,\ let $A$ be a stable Yang-Mills connection on a bundle $E$ over $M$ with compact,\ semi-simple Lie group.\ Assume $\eta\in\La^{1}(M,\mathfrak{g}_{E})$ is $d_{A}$-harmonic,\ then
$$[\ast F^{7}_{A},\eta]=0.$$
\end{lem}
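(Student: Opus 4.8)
The plan is to exploit the stability inequality (\ref{p3}) with a family of test one-forms built from $\eta$, and to extract from it the vanishing of the commutator $[\ast F^{7}_{A},\eta]$. First I would recall that on a $G_{2}$-manifold the bundle of Killing-type symmetries is not available (the holonomy is exactly $G_{2}$, so there are no parallel vector fields and, by Bochner, no nonzero Killing fields either), so the test variation cannot come from a Killing field as in \cite{bourguignon1981stability}. Instead, since $\eta$ is assumed $d_{A}$-harmonic, it is itself an admissible ``direction'' in the variational argument: I would plug $\psi=\eta$ (and, to get a sharp identity, also $\psi=t\,\eta$ and perhaps $\psi=f\eta$ for a cutoff/test function $f$) into (\ref{p3}). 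Because $d_{A}\eta=0$ the leading term $\|d_{A}\psi\|^{2}$ drops out, leaving the curvature-coupling term $2\langle F_{A},\eta\wedge\eta\rangle\geq 0$ as the main inequality to analyze.

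The key algebraic step is to rewrite $\langle F_{A},\eta\wedge\eta\rangle$ in terms of the operator $R^{A}$ from the Weitzenb\"ock formula (\ref{2.1}). In a local orthonormal frame, $\langle F_{A},\eta\wedge\eta\rangle$ expands as $\sum_{i<j}\langle F_{A}(e_{i},e_{j}),[\eta_{i},\eta_{j}]\rangle$, and using the invariance of the Killing form together with the definition $R^{A}(\eta)_{X}=\sum_{j}[F_{A}(e_{j},X),\eta_{j}]$ this becomes $\tfrac12\sum_{i}\langle R^{A}(\eta)_{e_{i}},\eta_{i}\rangle=\tfrac12\langle R^{A}(\eta),\eta\rangle$. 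Thus (\ref{p3}) forces $\langle R^{A}(\eta),\eta\rangle\geq 0$. Next I would apply the Weitzenb\"ock formula (\ref{2.1}) to the harmonic form $\eta$: since $d_{A}\eta=d^{\ast}_{A}\eta=0$ the left side vanishes, giving $0=\na^{\ast}_{A}\na_{A}\eta+\eta\circ Ric+R^{A}(\eta)$. On a holonomy-$G_{2}$ manifold the metric is Ricci-flat, so $\eta\circ Ric=0$, and pairing with $\eta$ and integrating gives $\|\na_{A}\eta\|^{2}+\langle R^{A}(\eta),\eta\rangle=0$. Combined with the stability bound $\langle R^{A}(\eta),\eta\rangle\geq 0$ this yields $\na_{A}\eta=0$ and $\langle R^{A}(\eta),\eta\rangle=0$.

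With $\eta$ now parallel, I would convert $\langle R^{A}(\eta),\eta\rangle=0$ into the sharper pointwise statement. Expanding once more, $\langle R^{A}(\eta),\eta\rangle=\sum_{i,j}\langle[F_{A}(e_{j},e_{i}),\eta_{j}],\eta_{i}\rangle=-\sum_{i,j}\langle F_{A}(e_{i},e_{j}),[\eta_{i},\eta_{j}]\rangle$, which I expect to rearrange (using ad-invariance) into a manifestly nonnegative sum of squared commutator norms of the form $\sum|[F_{A}(\cdot),\eta]|^{2}$; its vanishing then gives $[F_{A},\eta]=0$ as an endomorphism-valued relation, and projecting onto the $\La^{2}_{7}$ component yields $[\ast F^{7}_{A},\eta]=0$ (the Hodge star being an isometry that intertwines the commutator action). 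The main obstacle is this last identification: I need to verify that the quadratic form $\langle R^{A}(\eta),\eta\rangle$ is genuinely a sum of squares of commutators (so that its vanishing is equivalent to commutator vanishing, not merely to a weaker trace condition), and then to check that the $G_{2}$-decomposition $F_{A}=F^{7}_{A}+F^{14}_{A}$ interacts correctly with $\ast$ and with the Lie bracket so that the $F^{14}$-part drops out and only $[\ast F^{7}_{A},\eta]$ survives. The semisimplicity of $G$ guarantees the Killing form is (negative) definite, which is what makes ``sum of squares equals zero $\Rightarrow$ each term zero'' legitimate.
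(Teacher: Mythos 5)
Your first two steps are sound and in fact reproduce part of the paper's argument: plugging the harmonic $\eta$ into the stability inequality kills the $\|d_A\psi\|^2$ term, and pairing the Ricci-flat Weitzenb\"ock formula with $\eta$ gives $\|\na_A\eta\|^2+\langle F_A,[\eta,\eta]\rangle=0$, whence $\na_A\eta=0$ and the \emph{integrated scalar} identity $\langle F_A,[\eta,\eta]\rangle=0$. (The paper actually works with a sharper second-variation inequality, $\|\Pi^2_7(d_A\eta)\|^2+2\langle F^7_A,\eta\wedge\eta\rangle\ge0$, obtained from $YM(A)=3\|F^7_A\|^2+\kappa(E)$, together with the identity $2\langle F^7_A,[\eta,\eta]\rangle=\langle F^{14}_A,[\eta,\eta]\rangle$ coming from $[F_A,\eta]\wedge\phi^{(3)}=0$; this is what lets it isolate the $F^7_A$ component, which your unrefined inequality never sees.)

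The genuine gap is your final step. The quantity $\langle R^A(\eta),\eta\rangle=\sum_{i,j}\langle[F_A(e_j,e_i),\eta_j],\eta_i\rangle$ is \emph{not} a sum of squares of commutators; it is an indefinite quadratic form in $\eta$ (by ad-invariance it equals $-\sum_{i,j}\langle F_{ij},[\eta_i,\eta_j]\rangle$, which pairs $F_A$ linearly against $[\eta,\eta]$ and has no definite sign for general $F_A$). So its vanishing is exactly the single scalar equation $\langle F_A,[\eta,\eta]\rangle=0$ you already had, and does not yield any pointwise commutator vanishing; semisimplicity of $G$ does not help because there are no squares to sum. The missing idea is a polarization argument inside the variational inequality: the paper tests stability with the two-parameter family $t\eta+t^{3/2}\w$ for arbitrary $\w\in\La^1(M,\mathfrak{g}_E)$. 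Since $d_A\eta=0$ and (from the first part) $\langle F^7_A,\eta\wedge\eta\rangle=0$, the would-be dominant $t^2$-term drops out, and dividing by $t^{5/2}$ and letting $t\to0$ leaves $\langle F^7_A,[\eta,\w]\rangle\ge0$ for \emph{every} $\w$; a linear functional that is everywhere nonnegative vanishes, and choosing $\w=-\ast[\ast F^7_A,\eta]$ gives $\|[\ast F^7_A,\eta]\|^2_{L^2}=0$. Without this (or an equivalent device turning the quadratic information into a linear-in-$\w$ inequality), the conclusion $[\ast F^7_A,\eta]=0$ is not reachable from what you have established.
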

\begin{proof}
At first,\ we consider the variation $A+t\eta$,\ $\eta\in\La^{1}(M,\mathfrak{g}_{E})$,\ hence we have
\begin{equation}\label{3.11}
\|F_{A+t\eta}\|^{2}=3\|F^{7}_{A+t\eta}\|^{2}+\kappa(E).
\end{equation}
A direct {calculation shows}
\begin{equation}\nonumber
F^{7}_{A+t\eta}=F^{7}_{A}+t\Pi^{2}_{7}(d_{A}\eta)+t^{2}\Pi^{2}_{7}(\eta\wedge \eta),
\end{equation}
here {the definition of $\eta\wedge\eta$ is the same as the notation in Section 2.}\ Since $A$ is a stable Yang-Mills connection,\ we have
$$\frac{d^{2}}{dt^{2}}\big{|}_{t=0}\|F_{A+t\eta}\|^{2}_{L^{2}(M)}\geq0,$$
for any $\eta\in\La^{1}(M,\mathfrak{g}_{E})$, i.e.,
\begin{equation}\label{3.15}
\|\Pi^{7}_{2}(d_{A}\eta)\|^{2}+2\langle F^{7}_{A},\eta\wedge\eta\rangle\geq0.
\end{equation}
here,\ we used the identity
$$\langle F^{7}_{A},\eta\wedge \eta\rangle=\langle F^{7}_{A},\Pi^{2}_{7}(\eta\wedge\eta)\rangle.$$
Now,\ we assume $\eta\in\La^{1}(M,\mathfrak{g}_{E})$ is $d_{A}$-harmonic,\ i.e.,\ $d^{\ast}_{A}\eta=d_{A}\eta=0$,\ then the Weitzenb\"{o}ck formula gives
\begin{equation}\label{3.6}
\na^{\ast}_{A}\na_{A}\eta+R^{A}(\eta)=0,
\end{equation}
here we have used the vanishing of the Ricci curvature on $G_{2}$-manifold.\ Hence the $L^{2}$-inner product of (\ref{3.6}) with $\eta$,\ we obtain
\begin{equation}\label{Y25}
\|\na_{A}\eta\|^{2}+\langle F_{A},[\eta,\eta]\rangle=0.
\end{equation}
Since $d_{A}\eta=0$,\ thus $d_{A}d_{A}\eta=[F_{A},\eta]=0$,\ then
\begin{equation}\label{3.2}
\ast([F_{A},\eta]\wedge\phi^{(3)})=0.
\end{equation}
The $L^{2}$-inner product of (\ref{3.2}) with $\eta$
\begin{equation}\nonumber
\begin{split}
0&=\langle\ast([F_{A},\eta]\wedge\phi^{(3)}),\eta\rangle=-\langle\ast([F_{A}\wedge\phi^{(3)},\eta]),\eta\rangle\\
&=-\langle\ast([2\ast F^{7}_{A}-\ast F^{14}_{A},\eta]),\eta\rangle\\
&=-2\langle F^{7}_{A},[\eta,\eta]\rangle+\langle F^{14}_{A},[\eta,\eta]\rangle.\\
\end{split}
\end{equation}
Then,\ we have a identity:
\begin{equation}\label{G1}
2\langle F^{7}_{A},[\eta,\eta]\rangle=\langle F^{14}_{A},[\eta,\eta]\rangle.
\end{equation}
From (\ref{Y25}) and (\ref{G1}),\ we get
\begin{equation}\label{G2}
\begin{split}
\|\na_{A}\eta\|^{2}&=-\langle F_{A},[\eta,\eta]\rangle=-\langle F^{7}_{A},[\eta,\eta]\rangle-\langle F^{14}_{A},[\eta,\eta]\rangle\\
&=-3\langle F^{7}_{A},[\eta,\eta]\rangle.\\
\end{split}
\end{equation}
From (\ref{3.15}) and $d_{A}\eta=0$,\ we have
\begin{equation}\label{3.8}
\langle F^{7}_{A},[\eta,\eta]\rangle\geq0.
\end{equation}
From (\ref{G2}) and (\ref{3.8}),\ we obtain
$$\langle F^{7}_{A},\eta\wedge\eta\rangle=0$$
Now,\ we consider the variant $t\eta+t^{\frac{3}{2}}\w$,\ from (\ref{3.15}) we can get
\begin{equation}\nonumber
\begin{split}
0&\leq\|\Pi^{7}_{2}\big{(}d_{A}(t\eta+t^{\frac{3}{2}}\w)\big{)}\|^{2}_{L^{2}(M)}
+2\langle F^{7}_{A},(t\eta+t^{\frac{3}{2}}\w)\wedge(t\eta+t^{\frac{3}{2}}\w)\rangle\\
&=t^{3}\|\Pi^{7}_{2}(d_{A}\w)\|^{2}+2t^{\frac{5}{2}}\langle F^{7}_{A},[\eta,\w]\rangle+t^{3}\langle F^{7}_{A},\w\wedge\w\rangle\\
\end{split}
\end{equation}
Hence
$$0\leq t^{\frac{1}{2}}\|\Pi^{7}_{2}(d_{A}\w)\|^{2}+2\langle F^{7}_{A},[\eta,\w]\rangle
+t^{\frac{1}{2}}\langle F^{7}_{A},\w\wedge\w\rangle.$$
Taking $t\rightarrow0$,\ then we have
\begin{equation}\label{G5}
0\leq\langle F^{7}_{A},[\eta\wedge\w]\rangle=\langle\ast[\ast{F^{7}_{A}},\eta],\w\rangle,\ \w\in\La^{1}(M,\mathfrak{g}_{E}).
\end{equation}
We can choose $\w=-\ast[\ast{F^{7}_{A}},\eta]$,\ then the (\ref{G5}) yields
$$0=\|[\ast{F^{7}_{A}},\eta]\|^{2}_{L^{2}(M)}.$$
Hence,\ we have
\begin{equation}\label{3.16}
[\ast{F^{7}_{A}},\eta]=0.
\end{equation}
\end{proof}
\begin{rem}
In process of proving Lemma \ref{G3},\ we need the fact $\kappa(E):=-\int_{M}tr(F^{2}_{A}\wedge\phi^{(3)})$ is a topological invariant.\ Unfortunately,\ we can't {extend $\kappa(E)$ as a} topological invariant to a non-compact manifold.\ So the compactness of $G_{2}$-manifold in Lemma \ref{G3} is indispensable.\
\end{rem}
\begin{lem}
Let $A$ be a stable Yang-Mills connection {which} has harmonic curvature over a compact $G_{2}$-manifold,\ then the components of $\psi_{A}$ thus generates an abelian subalgebra of $\mathfrak{g}_{E}$,\ here $\psi_{A}$ is defined as (\ref{3.7}).
\end{lem}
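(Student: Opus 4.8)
The goal is to show that the components of $\psi_{A}$ generate an abelian subalgebra of $\mathfrak{g}_{E}$. Writing $\psi_{A}=\sum_{i}\psi_{A,i}\,e^{i}$ in a local orthonormal coframe $(e^{1},\dots,e^{7})$, with $\psi_{A,i}\in\mathfrak{g}_{E}$, the claim amounts to the pointwise identity $[\psi_{A,i},\psi_{A,j}]=0$ for all $i,j$. The plan is to exploit the fact that $\psi_{A}$ is itself $d_{A}$-harmonic (this is precisely the running hypothesis ``$A$ has harmonic curvature''), so that Lemma \ref{G3} applies with the test form $\eta$ taken to be $\psi_{A}$.

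First I would invoke Lemma \ref{G3} directly: since $\psi_{A}$ is $d_{A}$-harmonic by assumption, the lemma gives
\begin{equation}\nonumber
[\ast F^{7}_{A},\psi_{A}]=0.
\end{equation}
The next step is to translate this bracket condition, which involves the $2$-form $\ast F^{7}_{A}$, into a statement purely about the components of the $1$-form $\psi_{A}$. The bridge is the defining relation (\ref{3.7}), namely $\ast(\psi^{(4)}\wedge\psi_{A})=F^{7}_{A}$, equivalently $\ast F^{7}_{A}=\psi^{(4)}\wedge\psi_{A}$ up to the established normalization. Thus the vanishing bracket becomes $[\psi^{(4)}\wedge\psi_{A},\psi_{A}]=0$, where the bracket is taken on the $\mathfrak{g}_{E}$-values while the form parts $\psi^{(4)}\wedge\psi_{A}$ and $\psi_{A}$ wedge together.

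The heart of the argument is then algebraic: I would expand $[\psi^{(4)}\wedge\psi_{A},\psi_{A}]$ in the orthonormal coframe and use the structure constants of the fundamental $4$-form $\psi^{(4)}$ (the $G_{2}$ contraction identities for $\phi^{(3)}$ and $\psi^{(4)}$, as recorded in \cite{RLB,Sa}) to isolate the individual brackets $[\psi_{A,i},\psi_{A,j}]$. Because $\psi^{(4)}$ is non-degenerate as a $G_{2}$-structure, the contraction of its components against the wedge $\psi_{A}\wedge\psi_{A}$ is injective enough to force each $[\psi_{A,i},\psi_{A,j}]$ to vanish separately; concretely, one pairs the vanishing expression against a suitable test tensor built from $\psi^{(4)}$ and uses positivity of the resulting quadratic form $\sum_{i,j}|[\psi_{A,i},\psi_{A,j}]|^{2}$.

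The main obstacle I anticipate is exactly this last decoupling step: passing from the single $G_{2}$-equivariant identity $[\psi^{(4)}\wedge\psi_{A},\psi_{A}]=0$ to the full set of pointwise commutation relations $[\psi_{A,i},\psi_{A,j}]=0$. One must check that the $G_{2}$ representation-theoretic contraction does not annihilate any genuine bracket component—i.e.\ that the map $\psi_{A}\wedge\psi_{A}\mapsto[\psi^{(4)}\wedge\psi_{A},\psi_{A}]$, viewed as a linear map on $\La^{2}\otimes\mathfrak{g}_{E}$-valued data, has trivial kernel on the relevant $\La^{2}_{7}$ or $\La^{2}_{14}$ piece. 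This is where the precise $G_{2}$ identities for $\phi^{(3)},\psi^{(4)}$ enter, and it is the step that genuinely uses holonomy exactly $G_{2}$ rather than a proper subgroup. Once the quadratic form is shown to be a positive multiple of $\sum_{i<j}|[\psi_{A,i},\psi_{A,j}]|^{2}$, its vanishing yields the abelian conclusion.
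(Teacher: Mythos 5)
Your first two steps coincide with the paper's: you apply Lemma \ref{G3} with $\eta=\psi_{A}$ (legitimate, since harmonic curvature means exactly that $\psi_{A}$ is $d_{A}$-harmonic) and rewrite $[\ast F^{7}_{A},\psi_{A}]=2(\psi_{A}\wedge\psi_{A})\wedge\psi^{(4)}=0$. But the final ``decoupling'' step, which you yourself flag as the main obstacle, does not work, and no choice of contraction against $\psi^{(4)}$ can make it work. The identity $(\psi_{A}\wedge\psi_{A})\wedge\psi^{(4)}=0$ says only that $\psi_{A}\wedge\psi_{A}\in\La^{2}_{14}(M,\mathfrak{g}_{E})$: by the very characterization $\La^{2}_{14}=\{\a\mid\a\wedge\psi^{(4)}=0\}$ used in Section 3.1, the map $L_{\psi^{(4)}}$ on $2$-forms has the entire $14$-dimensional summand $\La^{2}_{14}$ as its kernel. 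So the quadratic form you hope to produce cannot be a positive multiple of $\sum_{i<j}|[\psi_{A,i},\psi_{A,j}]|^{2}$; it only controls the $\La^{2}_{7}$-component of $\psi_{A}\wedge\psi_{A}$, and pointwise linear algebra gives you nothing about the $\La^{2}_{14}$-component. (The ``holonomy exactly $G_{2}$'' hypothesis is also not what is needed here; in the paper it enters only in the dichotomy at the end of the Main Theorem, not in this lemma.)

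The missing idea is a second, global variational step. Since $d_{A}\psi_{A}=0$ and $\Pi^{2}_{7}(\psi_{A}\wedge\psi_{A})=0$, the family $A(t)=A+t\psi_{A}$ satisfies $F^{7}_{A+t\psi_{A}}=F^{7}_{A}$ for all $t$, so by the energy identity $YM(A(t))=3\|F^{7}_{A(t)}\|^{2}+\kappa(E)$ (which is where compactness of $M$ is used, to make $\kappa(E)$ a topological invariant) the function $YM(A+t\psi_{A})$ is constant in $t$. On the other hand $YM(A+t\psi_{A})=\|F_{A}+t\,d_{A}\psi_{A}+t^{2}\psi_{A}\wedge\psi_{A}\|^{2}$ is a quartic polynomial in $t$ with leading coefficient $\|\psi_{A}\wedge\psi_{A}\|^{2}_{L^{2}(M)}$; constancy forces this coefficient to vanish, hence $\psi_{A}\wedge\psi_{A}=0$ identically, which is the abelian conclusion. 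Without this step your argument only proves that the $\La^{2}_{7}$-part of $\psi_{A}\wedge\psi_{A}$ vanishes.
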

\begin{proof}
Since $F^{7}_{A}=\ast(\psi_{A}\wedge\psi^{(4)})$, then $\ast{F^{7}_{A}}=\psi_{A}\wedge\psi^{(4)}$.\ Hence,
\begin{equation}\nonumber
\begin{split}
[\ast{F^{7}_{A}},\psi_{A}]&=(\psi_{A}\wedge\psi^{(4)})\wedge\psi_{A}+\psi_{A}\wedge(\psi_{A}\wedge\psi^{(4)})\\
&=2(\psi_{A}\wedge\psi_{A})\wedge\psi^{(4)}.\\
\end{split}
\end{equation}
Since $\psi_{A}$ is harmonic,\ from Lemma \ref{G3},\ we obtain
\begin{equation}\label{3.17}
0=[\ast{F^{7}_{A}},\psi_{A}]=2(\psi_{A}\wedge\psi_{A})\wedge\psi^{(4)}.
\end{equation}
i.e.
$$\psi_{A}\wedge\psi_{A}\in\La^{2}_{14}(M,\mathfrak{g}_{E}).$$
Choosing $A(t)=A+t\psi_{A}$,\ then we get
$$F^{7}_{A+t\psi_{A}}=F^{7}_{A}+t\Pi^{2}_{7}(d_{A}\psi_{A})+t^{2}\Pi^{2}_{7}(\psi_{A}\wedge\psi_{A})=F^{7}_{A}.$$
Then,\ we have
$$YM(A+t\psi_{A})=YM(A).$$
The preceding identity implies that the quartic polynomial in $t$,\ $YM(A+t\psi_{A})$ is in fact constant.\  In particular,\ the  coefficient of $t^{4}$ and this vanish implies,
\begin{equation}\label{G20}
0=\psi_{A}\wedge\psi_{A}.
\end{equation}
\end{proof}
\textbf{Proof Main Theorem}.\ The proof is similar to Theorem 6.21 in \cite{MS}.\\
{The Weitzenb\"{o}ck formula gives}
$$0=\|d_{A}\psi_{A}\|^{2}+\|d^{\ast}_{A}\psi_{A}\|^{2}=\|\na_{A}\psi_{A}\|^{2}.$$
Here we used the vanishing of the Ricci curvature on $G_{2}$-manifolds and (\ref{G20}).\ Hence
$$\na_{A}\psi_{A}=0.$$
Let $R_{ij}dx^{i}\wedge dx^{j}$ denote the Riemann curvature tensor viewed as an $ad(T^{\ast}M)$ valued $2$-form,\ The vanishing of $\na_{A}\psi_{A}$ implies
$$0=[\na_{i},\na_{j}]\psi_{A}=ad((F_{ij})+R_{ij})\psi_{A}$$
for all $i,j$.\ Because $\psi_{A}$ takes values in an abelian subalgebra of $\mathfrak{g}_{E}$,\ $[F_{ij},\psi_{A}]\perp R_{ij}\psi_{A}$.\ Hence $R_{ij}\psi_{A}=0$,\ and the components of $\psi_{A}$ are in the kernel of the Riemannian curvature operator.\ This reduces the Riemannian holonomy group,\ unless $\psi_{A}=0$ which implies $F^{7}_{A}=0$ (from Proposition \ref{3.1}).\ Thus,\ we have the dichotomy:\ $\psi_{A}\neq0$ implies a reduction of the holonomy of $M$,\ and $\psi_{A}$ implies the connection $A$ is $G_{2}$-instanton.

\section{Calabi-Yau $3$-fold}

In this section,\ we begin to prove the second theorem of our article.\ At first,\ we consider the Yang-Mills connection $A$ on a bundle $E$ over a K\"{a}hler manifold.\ The curvature splits into $F_{A}=F^{2,0}_{A}+F_{A}^{1,1}+F^{0,2}_{A}$,\ where $F^{p,q}_{A}$ is the $(p,q)$-component.\ Then we get from the Bianchi identity
\begin{equation}\nonumber
\begin{split}
&\pa_{A}F^{2,0}_{A}=\bar{\pa}_{A}F^{0,2}_{A}=0\\
&\bar{\pa}_{A}F^{2,0}_{A}+\pa_{A}F^{1,1}_{A}=\pa_{A}F^{0,2}_{A}+\bar{\pa}_{A}F^{1,1}_{A}=0.\\
\end{split}
\end{equation}
Decompose the curvature,\ $F_{A}$,\ as
$$ F_{A}=F^{2,0}_{A}+F^{1,1}_{A0}+\frac{1}{n}\hat{F}_{A}\otimes\w+F^{0,2}_{A}$$
where $\hat{F}_{A}:=\La_{\w}F_{A}$.
\begin{prop}\label{Y4}(\cite{HT2} Proposition 2.1)
Let $A$ be a Yang-Mills connection on a bundle $E$ over a K\"{a}hler $n$-fold,\ then
\begin{equation}\nonumber
\begin{split}
&(1)\ 2\bar{\pa}^{\ast}_{A}F^{0,2}_{A}=\sqrt{-1}\bar{\pa}_{A}\hat{F}_{A},\\
&(2)\ 2\pa_{A}^{\ast}F^{2,0}_{A}=-\sqrt{-1}\pa_{A}\hat{F}_{A}.\\
\end{split}
\end{equation}
\end{prop}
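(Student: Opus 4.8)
The plan is to reduce both identities to the K\"{a}hler (Nakano) commutation relations
$$[\La_{\w},\bar{\pa}_{A}]=-\sqrt{-1}\,\pa^{\ast}_{A},\qquad [\La_{\w},\pa_{A}]=\sqrt{-1}\,\bar{\pa}^{\ast}_{A},$$
which hold for any metric (unitary) connection $A$ on a Hermitian bundle over a K\"{a}hler manifold, combined with the bidegree decompositions of the Bianchi and Yang-Mills equations. First I would record two elementary facts: that $\La_{\w}$ annihilates forms of pure type $(2,0)$ and $(0,2)$, so that $\La_{\w}F_{A}=\La_{\w}F^{1,1}_{A}=\hat{F}_{A}$; and that the Yang-Mills equation $d^{\ast}_{A}F_{A}=0$, after splitting $d^{\ast}_{A}=\pa^{\ast}_{A}+\bar{\pa}^{\ast}_{A}$ and separating by bidegree, yields
$$\pa^{\ast}_{A}F^{2,0}_{A}+\bar{\pa}^{\ast}_{A}F^{1,1}_{A}=0,\qquad \pa^{\ast}_{A}F^{1,1}_{A}+\bar{\pa}^{\ast}_{A}F^{0,2}_{A}=0.$$

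For part (1), I would apply $\pa^{\ast}_{A}=\sqrt{-1}(\La_{\w}\bar{\pa}_{A}-\bar{\pa}_{A}\La_{\w})$ to $F^{1,1}_{A}$. The term $-\sqrt{-1}\bar{\pa}_{A}\La_{\w}F^{1,1}_{A}$ is exactly $-\sqrt{-1}\bar{\pa}_{A}\hat{F}_{A}$. For the remaining term $\sqrt{-1}\La_{\w}\bar{\pa}_{A}F^{1,1}_{A}$ I would invoke the Bianchi relation $\bar{\pa}_{A}F^{1,1}_{A}=-\pa_{A}F^{0,2}_{A}$ and then the second K\"{a}hler identity applied to $F^{0,2}_{A}$, which (since the commutator term $\pa_{A}\La_{\w}F^{0,2}_{A}$ drops out as $\La_{\w}F^{0,2}_{A}=0$) reads $\bar{\pa}^{\ast}_{A}F^{0,2}_{A}=-\sqrt{-1}\La_{\w}\pa_{A}F^{0,2}_{A}$. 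This rewrites the remaining term as $\bar{\pa}^{\ast}_{A}F^{0,2}_{A}$, giving $\pa^{\ast}_{A}F^{1,1}_{A}=\bar{\pa}^{\ast}_{A}F^{0,2}_{A}-\sqrt{-1}\bar{\pa}_{A}\hat{F}_{A}$. Substituting the Yang-Mills relation $\pa^{\ast}_{A}F^{1,1}_{A}=-\bar{\pa}^{\ast}_{A}F^{0,2}_{A}$ then collapses this to $2\bar{\pa}^{\ast}_{A}F^{0,2}_{A}=\sqrt{-1}\bar{\pa}_{A}\hat{F}_{A}$.

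Part (2) follows by the mirror computation: apply $\bar{\pa}^{\ast}_{A}=-\sqrt{-1}(\La_{\w}\pa_{A}-\pa_{A}\La_{\w})$ to $F^{1,1}_{A}$, use the Bianchi relation $\pa_{A}F^{1,1}_{A}=-\bar{\pa}_{A}F^{2,0}_{A}$ together with the identity $\pa^{\ast}_{A}F^{2,0}_{A}=\sqrt{-1}\La_{\w}\bar{\pa}_{A}F^{2,0}_{A}$, and finally substitute the Yang-Mills relation $\bar{\pa}^{\ast}_{A}F^{1,1}_{A}=-\pa^{\ast}_{A}F^{2,0}_{A}$; the opposite signs of the two K\"{a}hler identities produce $2\pa^{\ast}_{A}F^{2,0}_{A}=-\sqrt{-1}\pa_{A}\hat{F}_{A}$. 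One may also read (2) as the formal conjugate of (1), since for a unitary curvature $F^{2,0}_{A}$ and $F^{0,2}_{A}$ are interchanged by Hermitian conjugation and $\hat{F}_{A}$ is skew-Hermitian.

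The bidegree bookkeeping is routine; the only substantive point I must justify is that the Nakano commutators $[\La_{\w},\pa_{A}]$ and $[\La_{\w},\bar{\pa}_{A}]$ carry no extra curvature correction terms. This is the step I expect to be the genuine obstacle, but it is in fact harmless here: these are first-order identities whose validity depends only on the principal symbol of $d_{A}$ and on the parallelism $\na\w=0$, so they are insensitive both to the bundle twist and to the non-integrability of $A$ (in particular they do not require $F^{0,2}_{A}=0$), holding for any metric-compatible connection on a K\"{a}hler manifold.
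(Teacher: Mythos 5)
Your proof is correct: the sign bookkeeping with the conventions $[\La_{\w},\bar{\pa}_{A}]=-\sqrt{-1}\,\pa^{\ast}_{A}$ and $[\La_{\w},\pa_{A}]=\sqrt{-1}\,\bar{\pa}^{\ast}_{A}$ (the same conventions the paper uses elsewhere, e.g.\ in the proof of Proposition \ref{D3}) checks out, and your closing remark that these commutators hold for any metric connection without curvature corrections is the right justification. The paper itself gives no proof of Proposition \ref{Y4}, only a citation to \cite{HT2}, but your derivation --- split Bianchi and Yang--Mills by bidegree, apply the Nakano identities to $F^{1,1}_{A}$, and use $\La_{\w}F^{0,2}_{A}=\La_{\w}F^{2,0}_{A}=0$ --- is the standard argument used there and in Itoh's original treatment.
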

The argument {which follow is the same as in} \cite{He} Section 1.3.\ Let $CY^{3}$ be a Calabi-Yau $3$-fold,\ we will see that the Cartesian product $M=CY^{3}\times S^{1}$ is a naturally a real $7$-dimensional $G_{2}$-manifold.\ Starting with the K\"{a}hler form $\w\in\La^{1,1}(CY^{3})$ and holomorphic volume form $\Om$ on $CY^{3}$ (\cite{GHJ}),\ define
\begin{equation}\label{D1}
\begin{split}
&\phi^{(3)}=\w\wedge d\theta+Im\Om,\\
&\psi^{(4)}=\frac{1}{2}\w^{2}-Re\Om\wedge d\theta.\\
\end{split}
\end{equation}
Here $\theta$ is the coordinate $1$-form on $S^{1}$,\ and the Hodge {star} on $M$ is given by the product of the K\"{a}hler metric on $CY^{3}$ and the standard flat metric on $S^{1}$.\ Now,\ a connection $A$ on $E\rightarrow CY^{3}$ pulls back to $p_{1}^{\ast}E\rightarrow CY^{3}\times S^{1}$ via the canonical projection:
$$p_{1}:CY^{3}\times S^{1}\rightarrow CY^{3},$$
and so do the forms $\w$ and $\Om$ (for simplicity we keep the same notation for objects on $CY^{3}$ and their pullbacks to $M$).\ The Proposition 8 in \cite{He} shows that a connection on $E$ whose curvature $F_{A}$ is a $\w$-instanton,\ then the pullback of $A$ to the $G_{2}$-manifold $CY^{3}\times S^{1}$ is a $G_{2}$-instanton.\\
In this article,\ we consider $E \rightarrow CY^{3}$ is a Hermitian vector bundle and $A$ is the Hermitian connection on $E$.\ We choose a {local} trivialization such that $E$ is isomorphic to the trivial bundle.\ Then $d_{A}:=d+A$ with $\bar{A}^{T}=-A$.\ For its curvature $F_{A}=d_{A}+A\wedge A$ one obtains
$$\bar{F}^{T}_{A}=-F_{A},$$
then we have
$$F^{0,2}_{A}=-(\bar{F}^{0,2}_{A})^{T}\ and\ \La_{w}F^{1,1}_{A}=-\overline{\La_{\w}F^{1,1}_{A}}^{T}.$$
We define $\xi_{A}\in\La^{0,1}(M,\mathfrak{g}^{\C}_{E})$,\ such that
\begin{equation}\label{CY3}
\ast(\xi_{A}\wedge\Om)=F^{0,2}_{A}.
\end{equation}
Then,\ we have a useful
\begin{prop}\label{CY1}
Let $A$ be a Hermitian connection on a Hermitian vector bundle $E$ over a complete Calabi-Yau $3$-fold.\ The following conditions are equivalent:\\
(1)\ $\bar{\pa}_{A}(\La_{\w}F_{A})=0$;\\
(2)\ $\xi_{A}$ is harmonic.\\
where $\psi_{A}$ is defined as (\ref{CY3}).
\end{prop}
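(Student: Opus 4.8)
The plan is to transport both conditions to the curvature components $F^{0,2}_{A}$ and $F^{2,0}_{A}$ through the correspondence that defines $\xi_{A}$, and then to bridge the two sides with Proposition~\ref{Y4} and the K\"ahler identities. Since $CY^{3}$ has holonomy $SU(3)$, the form $\Om$ is parallel and of constant norm, so $\xi\mapsto\ast(\xi\wedge\Om)$ is, up to a constant, a parallel isometry from $(0,1)$-forms onto $(0,2)$-forms; conjugating (\ref{CY3}) and using $\bar{F}^{T}_{A}=-F_{A}$ produces the companion correspondence between the $(1,0)$-form $\bar{\xi}_{A}$ and $F^{2,0}_{A}$. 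Because $\ast$ and $\Om$ are parallel and $\Om$ is holomorphic, applying $\bar{\pa}_{A}$ to (the conjugate of) (\ref{CY3}) and invoking the K\"ahler adjoint relation $\bar{\pa}^{*}_{A}=-\ast\pa_{A}\ast$---after the conjugation that interchanges $\pa_{A}$ and $\bar{\pa}_{A}$---I would establish
$$\bar{\pa}_{A}\xi_{A}=0\ \Longleftrightarrow\ \bar{\pa}^{*}_{A}F^{0,2}_{A}=0,$$
and by the mirror computation $\pa_{A}\xi_{A}=0\Leftrightarrow\pa^{*}_{A}F^{2,0}_{A}=0$.

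Next I would reduce harmonicity of $\xi_{A}$ to a closedness condition. As $\xi_{A}$ is of type $(0,1)$, $\pa^{*}_{A}\xi_{A}=0$ holds for degree reasons, so $d^{*}_{A}\xi_{A}=\bar{\pa}^{*}_{A}\xi_{A}$; moreover the K\"ahler identity $[\La_{\w},\pa_{A}]=\sqrt{-1}\,\bar{\pa}^{*}_{A}$ together with $\La_{\w}\xi_{A}=0$ gives $\bar{\pa}^{*}_{A}\xi_{A}=-\sqrt{-1}\,\La_{\w}(\pa_{A}\xi_{A})$. Hence $d_{A}\xi_{A}=0$ already forces $d^{*}_{A}\xi_{A}=0$, so $\xi_{A}$ is $d_{A}$-harmonic precisely when $\pa_{A}\xi_{A}=0$ and $\bar{\pa}_{A}\xi_{A}=0$. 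Combined with the first step, condition (2) is equivalent to the pair $\bar{\pa}^{*}_{A}F^{0,2}_{A}=0$ and $\pa^{*}_{A}F^{2,0}_{A}=0$.

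Then I would insert the Yang-Mills structure. Proposition~\ref{Y4} yields $2\bar{\pa}^{*}_{A}F^{0,2}_{A}=\sqrt{-1}\,\bar{\pa}_{A}\hat{F}_{A}$ and $2\pa^{*}_{A}F^{2,0}_{A}=-\sqrt{-1}\,\pa_{A}\hat{F}_{A}$, so the two curvature conditions read $\bar{\pa}_{A}\hat{F}_{A}=0$ and $\pa_{A}\hat{F}_{A}=0$. Since $\hat{F}_{A}=\La_{\w}F_{A}$ is skew-Hermitian, $\overline{\hat{F}_{A}}^{T}=-\hat{F}_{A}$, its $\pa_{A}$- and $\bar{\pa}_{A}$-derivatives are exchanged by conjugation, so these two conditions coincide and reduce to (1). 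This closes the equivalence (1)$\Leftrightarrow$(2).

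The main obstacle is the bookkeeping in the first step: one must pin down the precise type and conjugation hidden in (\ref{CY3})---the role of $\ast$ versus the conjugate-linear star, the interchange of $F^{0,2}_{A}$ with $\overline{F^{2,0}_{A}}$, and the signs in $\bar{\pa}^{*}_{A}=-\ast\pa_{A}\ast$---and check that the $\Om$-correspondence genuinely sends $\bar{\pa}_{A}\xi_{A}$ to $\bar{\pa}^{*}_{A}F^{0,2}_{A}$ and not to $\bar{\pa}_{A}F^{0,2}_{A}$, which vanishes by the Bianchi identity in any case. A secondary point is that the clean passage in the third step genuinely uses the Yang-Mills equation through Proposition~\ref{Y4}: for a merely Hermitian connection one only has $\bar{\pa}^{*}_{A}F^{0,2}_{A}=\sqrt{-1}\,\bar{\pa}_{A}\hat{F}_{A}+\pa^{*}_{A}F^{1,1}_{A}$, and the extra term $\pa^{*}_{A}F^{1,1}_{A}$ must be absorbed before the equivalence can hold.
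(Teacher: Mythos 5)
Your proof is correct and follows essentially the same route as the paper: transport both conditions through the parallel correspondence $\xi_{A}\mapsto\ast(\xi_{A}\wedge\Om)$ of (\ref{CY3}), get $d^{\ast}_{A}\xi_{A}=0$ essentially for free, and convert $\bar{\pa}_{A}\xi_{A}=0$ into $\bar{\pa}^{\ast}_{A}F^{0,2}_{A}=0$, hence into $\bar{\pa}_{A}\hat{F}_{A}=0$, via Proposition \ref{Y4}. You are in fact more careful than the paper's own argument, which treats only the $\bar{\pa}_{A}$-half and never addresses the $(1,1)$-component $\pa_{A}\xi_{A}$ of $d_{A}\xi_{A}$ (you dispose of it by the conjugation symmetry of the skew-Hermitian $\hat{F}_{A}$), and you rightly flag that Proposition \ref{Y4} --- hence this equivalence as proved --- uses the Yang-Mills equation even though the statement only assumes $A$ Hermitian.
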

\begin{proof}
In a local special unitary frame,\
$$F^{0,2}_{A}=F^{0,2}_{23}d\bar{z}^{2}\wedge d\bar{z}^{3}+F^{0,2}_{31}d\bar{z}^{3}\wedge d\bar{z}^{1}+F^{0,2}_{12}d\bar{z}^{1}\wedge d\bar{z}^{2},$$
hence from (\ref{CY3})
$$ \xi_{A}=-(F^{2,0}_{23}d\bar{z}^{1}+F^{2,0}_{31}d\bar{z}^{2}+F^{2,0}_{12}d\bar{z}^{3}),$$
{By a directly calculation},\ we have
$$\ast(F^{0,2}_{A}\wedge\Om)=\xi_{A}.$$
The Bianchi identity implies $\bar{\pa}_{A}F^{0,2}_{A}=0$,\ which is equivalent to
\begin{equation}
\bar{\pa}^{\ast}_{A}\xi_{A}=\ast(\bar{\pa}_{A}F^{0,2}_{A}\wedge\Om)=0.
\end{equation}
Applying $\bar{\pa}^{\ast}_{A}$ to each side of (\ref{CY3}) gives
$$-\ast(\bar{\pa}_{A}\xi_{A}\wedge\Om)=\bar{\pa}_{A}^{\ast}F^{0,2}_{A}.$$
Hence $\bar{\pa}_{A}^{\ast}F^{0,2}_{A}=0$ i.e., $\bar{\pa}_{A}(\La_{w}F_{A})=0$ (see Proposition \ref{Y4}),\ is equivalent to $\bar{\pa}_{A}\xi_{A}=0$.
\end{proof}
\begin{prop}\label{D3}
Let $CY^{3}$ be a compact Calabi-Yau $3$-fold,\ let $E$ be a Hermitian vector bundle over $X$,\ let $A$ be a Hermitian connection on $E$.\ Assume $A$ is a stable Yang-Mills connection,\ then the pullback of $A$ to the $G_{2}$-manifold $CY^{3}\times S^{1}$ is also a stable Yang-Mills connection.\ {Furthermore},\  we denote the pullback of $A$ {by} $\tilde{A}$,\ then the follow conditions are equivalent\\
(1) $\xi_{A}$ is $d_{A}$-harmonic,\ $\xi_{A}$ is {define as in} (\ref{CY3}),\\
(2) $\psi_{\tilde{A}}$ is $D_{\tilde{A}}$-harmonic,\ $\psi_{\tilde{A}}$ is {define as in} (\ref{3.7}).
\end{prop}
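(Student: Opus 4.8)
The statement contains two assertions: that the pullback $\tilde{A}=p_{1}^{\ast}A$ is again a stable Yang-Mills connection, and that harmonicity of $\xi_{A}$ on $CY^{3}$ is equivalent to harmonicity of $\psi_{\tilde{A}}$ on $M=CY^{3}\times S^{1}$. The plan is to treat them in turn, and I expect the second to be the harder one. For the stability assertion, I would first observe that $\tilde{A}$ is Yang-Mills on $M$: since $F_{\tilde{A}}=p_{1}^{\ast}F_{A}$ is horizontal and independent of the $S^{1}$-coordinate, $d^{\ast}_{\tilde{A}}F_{\tilde{A}}=p_{1}^{\ast}(d^{\ast}_{A}F_{A})=0$. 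Thus the second variational inequality (\ref{p3}) applies, and it suffices to show $0\leq\|D_{\tilde{A}}\psi\|^{2}_{L^{2}(M)}+2\langle F_{\tilde{A}},\psi\wedge\psi\rangle$ for every test form $\psi\in\La^{1}(M,\mathfrak{g}_{\tilde{E}})$.

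Write $\psi=a+f\,d\theta$, with $a$ horizontal and $f$ a section, both allowed to depend on $\theta$. Because $\tilde{A}$ is pulled back (so the covariant derivative in the $\theta$-direction is ordinary differentiation), one gets $D_{\tilde{A}}\psi=d_{A}a+d\theta\wedge(\pa_{\theta}a-d_{A}f)$, whence $\|D_{\tilde{A}}\psi\|^{2}=\|d_{A}a\|^{2}+\|\pa_{\theta}a-d_{A}f\|^{2}$. Since $F_{\tilde{A}}$ is horizontal, only the horizontal part $a\wedge a$ of $\psi\wedge\psi$ pairs with it, so $\langle F_{\tilde{A}},\psi\wedge\psi\rangle=\langle F_{A},a\wedge a\rangle$. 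Consequently the Hessian equals
\[
\int_{S^{1}}\Big(\int_{CY^{3}}|d_{A}a|^{2}+2\langle F_{A},a\wedge a\rangle\Big)\,d\theta+\int_{M}|\pa_{\theta}a-d_{A}f|^{2}.
\]
For each fixed $\theta$ the inner integral is nonnegative, since $a(\cdot,\theta)$ is an admissible variation and $A$ is stable on $CY^{3}$; the last integral is manifestly nonnegative. Hence $\tilde{A}$ is stable.

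For the equivalence, the crucial step is to compute $\psi_{\tilde{A}}$ explicitly from data on $CY^{3}$. I would substitute the product expressions $\phi^{(3)}=\w\wedge d\theta+Im\,\Om$ and $\psi^{(4)}=\tfrac{1}{2}\w^{2}-Re\,\Om\wedge d\theta$ into $\psi_{\tilde{A}}=\tfrac{1}{3}\ast(F^{7}_{\tilde{A}}\wedge\psi^{(4)})$, using the horizontality of $F_{\tilde{A}}$ together with $\w\wedge\Om=0$ and $\tfrac{1}{2}\w^{3}=3\,\mathrm{vol}_{CY^{3}}$, and first project $F_{\tilde{A}}$ onto $\La^{2}_{7}(M)$. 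Under $SU(3)$ the space $\La^{2}_{7}(M)$ is spanned by $\w$ (the image of $\pa_{\theta}\lrcorner\phi^{(3)}$) and by the forms $X\lrcorner Im\,\Om$ for horizontal $X$, which couple to $F^{2,0}_{A}+F^{0,2}_{A}$. The outcome I expect is
\[
\psi_{\tilde{A}}=\tfrac{1}{3}\hat{F}_{A}\,d\theta+c\,(\xi_{A}+\bar{\xi}_{A}),
\]
for a universal constant $c$: the vertical part carries the trace $\hat{F}_{A}=\La_{\w}F_{A}$, and the horizontal part carries $F^{0,2}_{A}$ through $\xi_{A}$ and its conjugate. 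This is consistent with the criterion of \cite{He} that $F^{7}_{\tilde{A}}=0$ if and only if $F^{0,2}_{A}=0$ and $\hat{F}_{A}=0$.

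Finally I would exploit the $S^{1}$-invariance once more: since $\psi_{\tilde{A}}$ is $\theta$-independent, $D_{\tilde{A}}\psi_{\tilde{A}}=c\,d_{A}(\xi_{A}+\bar{\xi}_{A})-\tfrac{1}{3}d\theta\wedge d_{A}\hat{F}_{A}$ and $D^{\ast}_{\tilde{A}}\psi_{\tilde{A}}=c\,d^{\ast}_{A}(\xi_{A}+\bar{\xi}_{A})$. Because $\tilde{A}$ is Yang-Mills, Lemma \ref{3.9} already forces $d^{\ast}_{\tilde{A}}\psi_{\tilde{A}}=0$ and $\Pi^{2}_{7}(d_{\tilde{A}}\psi_{\tilde{A}})=0$, while dually Proposition \ref{CY1} gives $\bar{\pa}^{\ast}_{A}\xi_{A}=0$; so on both sides only a single closedness condition survives. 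Invoking Proposition \ref{Y4}(1), namely $2\bar{\pa}^{\ast}_{A}F^{0,2}_{A}=\sqrt{-1}\bar{\pa}_{A}\hat{F}_{A}$, together with the reality relation $\bar{F}^{T}_{A}=-F_{A}$, I would show that $d_{A}\hat{F}_{A}=0$, $\Pi^{2}_{14}(d_{\tilde{A}}\psi_{\tilde{A}})=0$, and $\bar{\pa}_{A}\xi_{A}=0$ are mutually equivalent, which is precisely the asserted equivalence of (1) and (2). The main obstacle is the representation-theoretic bookkeeping behind the displayed formula for $\psi_{\tilde{A}}$: the generators of $\La^{2}_{7}(M)$ mix purely horizontal $2$-forms with $d\theta$-terms, so pinning down the exact expression and tracking the product Hodge star require care, as does verifying that the $(1,1)$-type cross term in $d_{A}(\xi_{A}+\bar{\xi}_{A})$ is governed by the Yang-Mills identities rather than imposing an extra constraint.
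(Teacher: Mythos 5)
Your first half --- that $\tilde{A}$ is Yang--Mills and stable on $CY^{3}\times S^{1}$ via the splitting of the Hessian into the fibrewise $CY^{3}$ Hessian plus a manifestly nonnegative $\theta$-term --- is exactly the paper's argument, and your computation of $\psi_{\tilde{A}}$ agrees with the paper's $3\psi_{\tilde{A}}=(\La_{\w}F_{A})\,d\theta+\tfrac{1}{2}(\xi_{A}+\xi_{A}^{\dagger})$ (so your constant is $c=\tfrac{1}{6}$). The gap is in the equivalence, and it sits precisely at the point you flag as ``the main obstacle'' and then leave unresolved. Your proposed chain --- that $d_{A}\hat{F}_{A}=0$, $\Pi^{2}_{14}(D_{\tilde{A}}\psi_{\tilde{A}})=0$ and $\bar{\pa}_{A}\xi_{A}=0$ are mutually equivalent and that this \emph{is} the statement --- misidentifies condition (1): $d_{A}$-harmonicity of the $(0,1)$-form $\xi_{A}$ means $\pa_{A}\xi_{A}=0$ \emph{and} $\bar{\pa}_{A}\xi_{A}=0$ (the codifferential being automatic from Bianchi), and nothing in your outline produces $\pa_{A}\xi_{A}=0$. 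Likewise $\Pi^{2}_{14}(D_{\tilde{A}}\psi_{\tilde{A}})=0$ is equivalent (given Lemma \ref{3.9}) to the three conditions $d_{A}\hat{F}_{A}=0$, $\bar{\pa}_{A}\xi_{A}=0$ \emph{and} $\pa_{A}\xi_{A}+\bar{\pa}_{A}\xi_{A}^{\dagger}=0$, not to $\bar{\pa}_{A}\xi_{A}=0$ alone; Proposition \ref{Y4}(1) and the reality relation only give the equivalence $\bar{\pa}_{A}\xi_{A}=0\Leftrightarrow d_{A}\hat{F}_{A}=0$, which is half of each side.

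The missing step, which is the actual content of the direction (2)$\Rightarrow$(1), is to extract $\pa_{A}\xi_{A}=0$ from the $(1,1)$-constraint $\pa_{A}\xi_{A}+\bar{\pa}_{A}\xi_{A}^{\dagger}=0$. The paper does this by first noting $\pa_{A}\pa_{A}\xi_{A}=[F^{2,0}_{A},\xi_{A}]=0$, then using the K\"ahler identities together with $\bar{\pa}^{\ast}_{A}\xi_{A}=0$ to get $\bar{\pa}^{\ast}_{A}\pa_{A}\xi_{A}=0$, and finally expanding $0=\|\pa_{A}\xi_{A}+\bar{\pa}_{A}\xi_{A}^{\dagger}\|^{2}$ and integrating by parts to kill the cross term $\langle\pa_{A}\xi_{A},\bar{\pa}_{A}\xi_{A}^{\dagger}\rangle=\langle\bar{\pa}^{\ast}_{A}\pa_{A}\xi_{A},\xi_{A}^{\dagger}\rangle=0$, forcing $\|\pa_{A}\xi_{A}\|^{2}=0$; a further K\"ahler identity then recovers $\bar{\pa}^{\ast}_{A}\xi_{A}=0$ so that $d_{A}\xi_{A}=d^{\ast}_{A}\xi_{A}=0$. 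Without some version of this global $L^{2}$ orthogonality argument your proposal does not close, so as written it establishes the stability claim and the (1)$\Rightarrow$(2) direction but not (2)$\Rightarrow$(1).
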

\begin{proof}
Since $A$ is a Yang-Mills connection\ i.e.,\ $d_{A}^{\ast}F_{A}=0$,\ hence
$$D_{\tilde{A}}\ast F_{\tilde{A}}=D_{\tilde{A}}(d\theta\wedge\ast_{CY^{3}}F_{\tilde{A}})=-d\theta\wedge p_{1}^{\ast}(d_{A}\ast_{CY^{3}}F_{A})=0.$$
Here {${D_{\tilde{A}}}$ denote the covariant derivative with respect to $\tilde{A}$ and $d_{A}$ is the covariant derivative with respect to $A$},\ $\ast_{CY^{3}}$ denotes the Hodge $\ast$-operator on $CY^{3}$.\ If $A$ is a stable Yang-Mills connection over $CY^{3}$,\ then for $\a\in\La^{1}(CY^{3},\mathfrak{g}_{E})$,\ we have:
$$\int_{CY^{3}}|d_{A}\a|^{2}+2\langle F_{A},\a\wedge\a\rangle\geq0.$$
Hence,\ for $\tilde{\eta}\in\La^{1}(CY^{3}\times S^{1},\mathfrak{g}_{E})$ (we denote $\tilde{\eta}(\theta,x):=\eta_{0}d\theta+\eta(\theta,x)$),\ we have
\begin{equation}\nonumber
\begin{split}
&\quad\int_{CY^{3}\times S^{1}}|D_{\tilde{A}}\tilde{\eta}(\theta,x)|^{2}+2\langle F_{\tilde{A}},\tilde{\eta}(\theta,x)\wedge\tilde{\eta}(\theta,x)\rangle\\
&=\int_{S^{1}}\big{(}\int_{CY^{3}}|d_{\tilde{A}}{\eta}(\theta,x)|^{2}+2\langle F_{\tilde{A}},\eta(\theta,x)\wedge\eta(\theta,x)\rangle\big{)}d\theta+\int_{CY^{3}\times S^{1}}|d_{\tilde{A}}\eta_{0}\wedge d\theta|^{2}\geq0,\\
\end{split}
\end{equation}
then the pullback of $A$ to the $G_{2}$-manifold $CY^{3}\times S^{1}$ is also a stable Yang-Mills connection.\\
If $A$ is a Yang-Mills connection and $\xi_{A}$ is $d_{A}$-harmonic,\ i.e.
$$d_{A}(F^{0,2}_{A}\wedge\Om)=d^{\ast}_{A}(F^{0,2}_{A}\wedge\Om)=0,$$
here $\Om$ is holomorphic volume on $CY^{3}$.\ Then the pullback of $A$ to the $G_{2}$-manifold $CY^{3}\times S^{1}$ satisfies
\begin{equation}\nonumber
\begin{split}
3\psi_{\tilde{A}}&=\ast(F_{\tilde{A}}\wedge\psi^{(4)})=\ast\big{(}F_{\tilde{A}}\wedge(\frac{1}{2}\w^{2}-Re\Om\wedge d\theta)\big{)}\\
&=d\theta\wedge\ast_{CY^{3}}(F_{\tilde{A}}\wedge\frac{1}{2}\w^{2})
+\ast_{CY^{3}}\big{(}(F^{0,2}_{\tilde{A}}+F^{2,0}_{\tilde{A}})\wedge\frac{1}{2}(\Om+\bar{\Om})\big{)}.\\
&=(\La_{\w}F_{\tilde{A}})d\theta+\frac{1}{2}(\xi_{\tilde{A}}+\xi_{\tilde{A}}^{\dagger}).
\end{split}
\end{equation}
here $\xi_{\tilde{A}}\triangleq\ast(F^{0,2}_{A}\wedge\Om)$ and $\xi_{\tilde{A}}^{\dagger}\triangleq\ast(F^{0,2}_{A}\wedge\bar{\Om})$.\ In a local coordinate,\  $\xi_{\tilde{A}}^{\dagger}=-\bar{\xi}_{A}^{T}$.\ Hence,\ we have
\begin{equation}\nonumber
3D_{\tilde{A}}\psi_{\tilde{A}}=d_{\tilde{A}}(\La_{\w}F_{\tilde{A}})\wedge d\theta+\frac{1}{2}d_{\tilde{A}}(\xi_{\tilde{A}}+\xi_{\tilde{A}}^{\dagger})=0.
\end{equation}
here,\ we used the identity $d_{\tilde{A}}\xi_{\tilde{A}}^{\dagger}=-\overline{d_{\tilde{A}}\xi_{\tilde{A}}}^{T}=0$ (in a local coordinate) and Proposition \ref{CY1}.\ Hence if $\xi_{A}$ is $d_{A}$-harmonic,\ $\psi_{\tilde{A}}$ is $D_{\tilde{A}}$-harmonic.\\
If $\psi_{\tilde{A}}$ is $D_{\tilde{A}}$-harmonic,\ then we get
$$0=d_{\tilde{A}}(\xi_{\tilde{A}}+{\xi}^{\dagger}_{\tilde{A}})=\bar{\pa}_{A}\xi_{A}+\pa_{A}{\xi}^{\dagger}_{\tilde{A}}
+(\pa_{A}\xi_{A}+\bar{\pa}_{A}{\xi}^{\dagger}_{\tilde{A}}),$$
and
$$d_{\tilde{A}}\La_{\w}F_{\tilde{A}}=0,$$
hence,\ we have
$$0=\bar{\pa}_{\tilde{A}}\xi_{\tilde{A}},\ 0=\pa_{\tilde{A}}\xi_{\tilde{A}}+\bar{\pa}_{\tilde{A}}{\xi}^{\dagger}_{\tilde{A}},$$
and
$$0=\pa_{\tilde{A}}\La_{\w}F_{\tilde{A}}=\bar{\pa}_{\tilde{A}}\La_{\w}F_{\tilde{A}}.$$
Then,\ the connection $A$ on $E$ over Calabi-Yau $3$-fold and $\xi_{A}$ satisfy
\begin{equation}\label{D5}
0=\bar{\pa}_{A}\xi_{A}\ and\ 0=\pa_{A}\xi_{A}+\bar{\pa}_{A}{\xi}^{\dagger}_{A}
\end{equation}
We observed {an} identity,
$$\pa_{A}\pa_{A}\xi_{A}=[F^{2,0}_{A},\xi_{A}]=0.$$
By K\"{a}hler {identities},\ we have
$$\sqrt{-1}\bar{\pa}^{\ast}_{A}\pa_{A}\xi_{A}=\La_{w}(\pa_{A}\pa_{A}\xi_{A})-\pa_{A}(\La_{w}\pa_{A}\xi_{A})
=-\sqrt{-1}\pa_{A}\bar{\pa}^{\ast}_{A}\xi_{A}=0.$$
Then,\ we get
\begin{equation}\nonumber
\begin{split}
0&=\|\pa_{A}\xi_{A}+\bar{\pa}_{A}{\xi}_{A}^{\dagger}\|^{2}
=\|\pa_{A}\xi_{A}\|^{2}+\|\bar{\pa}_{A}{\xi}_{A}^{\dagger}\|^{2}+2\langle\pa_{A}\xi_{A},\bar{\pa}_{A}{\xi}_{A}^{\dagger}\rangle\\
&=\|\pa_{A}\xi_{A}\|^{2}+\|\bar{\pa}_{A}{\xi}_{A}^{\dagger}\|^{2}+2\langle\bar{\pa}_{A}^{\ast}\pa_{A}\xi_{A},{\xi}_{A}^{\dagger}\rangle\\
&=\|\pa_{A}\xi_{A}\|^{2}+\|\bar{\pa}_{A}{\xi}_{A}^{\dagger}\|^{2},\\
\end{split}
\end{equation}
hence,\ we get
\begin{equation}
\pa_{A}\xi_{A}=0.
\end{equation}
Using K\"{a}hler {identities} again,
\begin{equation}\label{3.18}
\bar{\pa}^{\ast}_{A}\xi_{A}=-\sqrt{-1}[\La_{\w},\pa_{A}]\xi_{A}=0.
\end{equation}
Combing the preceding identities (\ref{D5})--(\ref{3.18}) yields,
$$d_{A}\xi_{A}=d^{\ast}_{A}\xi_{A}=0.$$
\end{proof}
\textbf{Proof Theorem \ref{T1}}.\ Since $A$ is a stable Yang-Mills Hermitian connection and $\xi_{A}$ is $d_{A}$-harmonic,\ then the pullback connection $A$ (for simplicity we keep the same notation for pull back connection on $M:=CY^{3}\times S^{1}$) is also a stable Yang-Mills connection over $G_{2}$-manifold $M$ and $\psi_{A}$ is $D_{A}$-harmonic.\ Then,\ we can get
$$0=\psi_{A}\wedge\psi_{A}=
\big{(}(\La_{\w}F_{A})d\theta+\frac{1}{2}(\xi_{A}+{\xi}^{\dagger}_{A})\big{)}\wedge
\big{(}(\La_{\w}F_{A})d\theta+\frac{1}{2}(\xi_{A}+{\xi}^{\dagger}_{A})\big{)},$$
Hence
\begin{equation}\label{CY8}
0=[\La_{\w}F_{A},\xi_{A}]=[\xi_{A},\xi_{A}]=[\xi_{A},{\xi}^{\dagger}_{A}].
\end{equation}
The Weitzenb\"{o}ck formula now gives
$$0=\|d_{A}\xi_{A}\|^{2}+\|d^{\ast}_{A}\xi_{A}\|^{2}=\|\na_{A}\xi_{A}\|^{2}.$$
Here we used the vanishing of the Ricci curvature on Calabi-Yau manifolds and third identity of (\ref{CY8}).\ Hence
$$\na_{A}\xi_{A}=0.$$
Let $R_{ij}dx^{i}\wedge dx^{j}$ denote the Riemann curvature tensor viewed as an $ad(T^{\ast}M)$ valued $2$-form,\ The vanishing of $\na_{A}\xi_{A}$ implies
$$0=[\na_{i},\na_{j}]\xi_{A}=ad((F_{ij})+R_{ij})\xi_{A}$$
for all $i,j$.\ Because $\xi_{A}$ takes values in an abelian subalgebra of $\mathfrak{g}_{E}$,\ $[F_{ij},\xi_{A}]\perp R_{ij}\xi_{A}$.\ Hence $R_{ij}\xi_{A}=0$,\ and the components of $\xi_{A}$ are in the kernel of the Riemannian curvature operator.\ This reduces the Riemannian holonomy group,\ unless $\xi_{A}=0$ which implies $F^{0,2}_{A}=0$ .\ Thus,\ we have the dichotomy:\ $\xi_{A}\neq0$ implies a reduction of the holonomy of $CY^{3}$,\ and $\xi_{A}$ implies the bundle is holomorphic.\ Furthermore,\ if $A$ is a irreducible connection i.e., $ {\ker{d_{A}}|_{\Om^{0}(\mathfrak{g}_{E})}=0}$,\ since $d_{A}\La_{\w}F_{A}=0$,\ then $\La_{\w}F_{A}=0$.

\section*{Acknowledgment}
I would like to thank my supervisor Professor Sen Hu for providing numerous ideas during the course of stimulating exchanges.\ {I would like to thank the anonymous referee for a careful reading of my manuscript and helpful comments.}\ I also would like to thank Zhi Hu for further discussions about this work.\ I also would like to thank Professor Mark Stern for kind comments regarding this and {its} companion article \cite{MS}.\ This work is partially supported by Wu Wen-Tsun Key Laboratory of Mathematics of Chinese Academy of Sciences at USTC.

\bigskip
\footnotesize

\end{document}